\newtheorem{theo}{Theorem}
\newtheorem{lem}{Lemma}[section]
\newcommand{\eps}{\varepsilon}
\newcommand{\ed}{{\eps,\delta}}
\newcommand{\qqtext}[1]{\qquad\mbox{#1}\qquad}
\newcommand{\rd}{\mathrm d}
\newcommand{\R}{\mathbb{R}}
\newcommand{\narrowcv}{\overset{\ast}{\rightharpoonup}}
\newcommand{\M}{\mathcal{M}}
\newcommand{\A}{\mathcal{A}}
\newcommand{\E}{{\mathcal{E}}}
\newcommand{\I}{{\mathcal{I}}}
\renewcommand{\H}{{\mathcal{H}}}
\DeclareMathOperator{\argmin}{Argmin}
\newcommand{\vol}{V}
\renewcommand{\P}{\mathcal{P}}
\newcommand{\g}{\gamma}
\renewcommand{\O}{M}
\newcommand{\pM}{{\partial M}}
\DeclareMathOperator{\Ric}{Ric}
\def\dive{\operatorname{div}}
\numberwithin{equation}{section}
\begin{document}
%

\title[Entropy and Fisher information in non-convex domains]{Entropy and Fisher information in non-convex domains: one chain to rule them all}
\date{}

\author[J.B. Casteras]{Jean-Baptiste Casteras}
\address{CEMS.UL, Faculdade de Ci\^encias da Universidade de Lisboa, Edificio C6, Piso 1, Campo Grande 1749-016 Lisboa, Portugal
}
\email{jeanbaptiste.casteras@gmail.com}

\author[M. Flaim]{Marco Flaim}
\address{Institut f\"ur Angewandte Mathematik, Universit\"at Bonn, Bonn, Germany}
\email{flaim@iam.uni-bonn.de}

\author[L. Monsaingeon]{L\'{e}onard Monsaingeon}
\address{Grupo de F\'isica Matem\'atica, Departamento de Matem\'atica, Instituto Superior T\'ecnico, Av. Rovisco Pais 1049-001 Lisboa, Portugal
and
Institut \'Elie Cartan de Lorraine, Universit\'e de Lorraine, Site de Nancy B.P. 70239, F-54506 Vandoeuvre-l\`es-Nancy Cedex, France
}
\email{leonard.monsaingeon@tecnico.ulisboa.pt}

 \begin{abstract}
 We prove that the (square root) Fisher information functional is a strong Wasserstein upper gradient of the entropy on non-convex Riemannian domains.
This fills a gap in the literature by allowing one to completely dispense from $\lambda$-displacement convexity arguments.
Along the way we establish a novel quantitative short-time control of the Fisher information along the Neumann heat flow, and establish an exact chain rule under stronger $AC_2$ assumptions typically satisfied by curves of measures obtained as limits of JKO schemes.

\bigskip
\noindent \textbf{Keywords:} optimal transport; entropy; Fisher information; heat flow; chain rule
\end{abstract}

\maketitle

\section{Introduction}
Since the groundbreaking work of R. Jordan, D. Kinderlehrer, and F. Otto \cite{JKO} it is known that many Fokker-Planck equations can be considered as variational gradient flows of entropy functionals with respect to Wasserstein metrics, see e.g. \cite{carrillo2003kinetic,matthes2009family,cances2017incompressible,casteras2023hidden,casteras2025sticky} to name but just a few, the classical textbooks \cite{OTAM,ambrosio2021lectures,figalli2021invitation}, and the excellent survey \cite{santambrogio2017euclidean}.

Putting a formally identified Wasserstein gradient flow structure on solid grounds and rigorously connecting the variational structure with PDEs and dissipation properties usually goes through the so-called \emph{JKO scheme}: given a small time-step $\tau>0$ one solves the recursive implicit Euler scheme
$$
\mu^{n+1}=\underset{\mu\in \P(X)}{\argmin}\left\{\frac{W^2(\mu,\mu^n)}{2\tau}+\E(\mu)\right\}
$$
and tries recovering in the small time-step limit a solution $\mu_t=\lim\limits_{\tau\to 0}\mu^\tau_t$, $t\in[0,T]$, of the evolution equation.
Here $X$ is an underlying space over which the Wasserstein distance $W$ is defined, $\mu^n\in \P(X)$ is the previous time-step, and $\E:\P(X)\to \R\cup\{+\infty\}$ is the driving functional often referred to as \emph{entropy}.
This goes back to De Giorgi \cite{de1980problems,de1993new} in abstract metric spaces under the name of \emph{minimizing movement}, and the theory was further developed in \cite{AGS}.
At this stage there are usually two different directions in which the analysis can go:
one can either
1) try to extract enough information from the Euler-Lagrange equations by whatever means available (typically leveraging very geometric features of optimal transport theory), and pass to the limit $\tau\to 0$ directly in the optimality conditions to retrieve the sought evolution PDE,
or
2) resort to more theoretical approaches and leverage the abstract theory of gradient flows in metric spaces \cite{AGS}.
In any case one typically ends-up with suitable dissipation estimates, and is able to show that the limiting curve $\mu_t=\lim\limits_{\tau\to 0}\mu^\tau_t$ satisfies an \emph{Energy Dissipation Inequality} (EDI).
This typically takes the form
\begin{equation}
\label{eq:EDI}
\frac{d}{dt}\E(\mu_t)\leq -\frac 12 |\dot\mu_t|^2 -\frac 12 |\partial\E|^2(\mu_t),
\tag{EDI}
\end{equation}
where $|\dot\mu_t|$ and $|\partial\E(\mu_t)|$ denote the metric speed and (descending) slope of $\E$.
One can also replace $|\partial \E|$ by a weaker notion of \emph{upper gradient}, leading to the notion of \emph{curve of maximal slope} \cite{AGS}.
We should point out that this is not merely a theoretical tool, and dissipation properties encoded in abstract upper-gradients or metric speeds can sometimes give access to very practical features of the model under investigation, for example boundary conditions as in \cite{casteras2025sticky,erbar2024gradient}.

In order to connect this rather abstruse notion of EDI/maximal slope solutions with more hands-on PDE considerations, one usually proceeds as follows.
From the JKO scheme one automatically gets that the limit curve $\mu=\lim\limits_{\tau\to 0}\mu^\tau\in AC_2([0,T];W)$ is absolutely in time with respect to the Wasserstein distance.
Via Benamou-Brenier-type results this gives a continuity equation $\partial_t\mu_t+\dive(\mu_t v_t)=0$ for some velocity field $v=v_t(x)$ optimally representing $|\dot\mu_t|=\|v_t\|_{L^2(\mu_t)}$ \cite{BB,lisini2007characterization}.
Assuming that one could moreover identify the squared slope $|\partial\E|^2(\mu)$ with the Fisher information functional
$$
\I(\mu)=\int_X \left|\nabla\frac{\delta\E}{\delta\mu}(\mu)\right|^2\,\rd\mu
$$
and establish a suitable chain-rule, allowing to differentiate $\frac{d}{dt}\E(\nu_t)=\int_X \nabla\frac{\delta\E}{\delta\mu}(\nu_t)\cdot u_t\,\rd \nu_t$ along \emph{arbitrary} curves $\partial_t\nu_t+\dive(\nu_t u_t)=0$, \eqref{eq:EDI} forces equality in the $L^2(\mu_t)$ Cauchy-Schwarz inequality.
This singles out the driving velocity field as $v_t=-\nabla\frac{\delta\E}{\delta\mu}(\mu_t)$, which is exactly the sought PDE $\partial_t\mu_t=-\dive(\mu_tv_t)=\dive(\mu_t\nabla\frac{\delta\E}{\delta\mu}(\mu_t))$ that one tries to identify as a Wasserstein gradient-flow to begin with.

However, one very delicate point is precisely to justify such a chain rule, whether it be an exact chain rule $\frac{d}{dt}\E(\nu_t)=[\dots]$ or an upper chain rule $\left|\frac{d}{dt}\E(\nu_t)\right|\leq [\dots]$.
In Euclidean spaces $X=\R^d$ (or in general Hilbert spaces), it is well-known that for the Boltzmann entropy $\H(\mu)=\int \rho\log\rho\,\rd x$ the slope $|\partial \H|^2(\mu)$ is given by the Fisher information functional $\I(\mu)=\int|\nabla\log\rho|^2\rd\mu$, where $\rho$ is the density of $\mu=\rho(x)\rd x$ w.r.t. the Lebesgue measure (see e.g. \cite[Thm 15.25]{ambrosio2021lectures} and \cite[Thm 10.4.9]{AGS} for Gaussian measures, also \cite[Prop. 10.3.18]{AGS} for an exact chain rule).
This can be extended to relative entropies in complete manifolds under Bakry-\'Emery conditions, more precisely under lower Ricci bounds \cite{von2005transport}.
This is very much related to R. McCann's notion of \emph{displacement convexity} \cite{mccann1997convexity}, which in the context of abstract metric spaces is rather enforced as \emph{$\lambda$-convexity} (or \emph{convexity along generalized geodesics}), see again \cite{AGS}.
However, even in smooth Euclidean domains $\Omega\subset\R^d$, all these notions of convexity completely fail unless $\Omega$ is convex.
More precisely, the entropy $\int_\Omega\rho\log\rho \,\rd x$ fails to be $\lambda$-convex for any $\lambda\in \R$, and this corresponds at least formally to $\mathsf{RCD}(-\infty,N)$ spaces.
These are too singular to fall within the theory \cite{ambrosio2014calculus,ambrosio2015bakry}.
To the best of our knowledge in non-convex domains it is not known whether $|\partial \H|^2(\mu)=\I(\mu)$ or even if $\sqrt\I(\mu)$ is an upper gradient for $\H$.
In fact, (exact or upper) chain-rules are only \emph{first} order calculus notions, while all the aforementioned notions of convexity are rather \emph{second} order in nature.
As far as we are concerned, and at a higher level, all the proofs of chain rules known to us that leverage $\lambda$-convexity do so as a quantified regularity for subdifferential calculus, rather than a truly geometric condition (see e.g. \cite[Chap. 10]{AGS}).
Hence one expects of course that $\lambda$-convexity is a superfluous assumption.
Here we intend to fill this gap, and provide an affirmative answer in non-convex Riemannian domains.

In \cite[Thm. 7.6]{ambrosio2014calculus} it is proved on mere Polish metric-measure spaces, and regardless of any convexity, that $|\partial\H|^2(\mu)=\I(\mu)$ if and only if $|\partial\H|$ is $W$-lower semi-continuous.
If one could check separately this lower semicontinuity, and since metric slopes are always (weak) upper-gradients \cite[Thm.~1.2.5]{AGS}, then clearly $\sqrt\I$ would be an upper gradient.
However, without a precise representation at hand (such as $|\partial \H|^2=\I$, which is precisely at stake!!) this is in practice impossible to determine a priori.
Our analysis provides a direct route to a positive answer.
We will show that, indeed, $\sqrt\I$ is always an upper gradient on smooth Riemannian domains without appealing to any convexity whatsoever (although we do not prove that $|\partial \H|^2=\I$).

Finally, another topics very tightly connected to displacement convexity is that of the growth/decay of the Fisher information along the heat flow.
At least in convex domains $\Omega\subset\R^d$, it is folklore knowledge in the PDE community that the Fisher information is non-increasing along the heat-flow (see e.g. \cite[Cor.~2.6]{santambrogio2024strong}, taking $\Omega=\{h<0\}$ with $h$ convex).
This is also known to the probabilistic community under Bakry-\'Emery lower bounds in complete, boundaryless manifolds, $\Ric(x)+\operatorname{Hess} V(x)\geq -K\implies \I(\mu_t)\leq  e^{2Kt}\I(\mu)$ (e.g. in whole Euclidean spaces with Gaussian reference measures), corresponding to $-K$-displacement convexity of the relative entropy.
In case of manifolds with boundaries $\partial M\neq \emptyset$ this seems to be limited to folklore, barely appears explicitly stated in the literature, and only works in convex domains.
For the sake of completeness we opted for providing a general statement in convex Riemannian domains, Theorem~\ref{theo:Fisher_decay_convex}, and included an elementary probabilistic proof.
However, and again to the best of our knowledge, no such estimate is available in the non-convex case.
We will establish a novel $\exp(\mathcal O(\sqrt t))$ growth estimate in non-convex domains, which shows that the geometry of the boundary actually determines the growth rate at leading order in the short-time regime.
We believe that this new estimate is of independent interest, although we only use it here for purely technical reasons in the proof of our chain rule (quantification of the heat-flow regularization effect).
\\

Our main results in non-convex Riemannian domains can be summarized as:
\begin{enumerate}
    \item 
    a quantified growth estimate of the Fisher information along the heat-flow, Theorem~\ref{theo:Fisher_decay}
    \item
    an upper chain rule for the entropy, where we show that $\mathfrak g(\mu)=\sqrt{\I}(\mu)$ is a Wasserstein upper-gradient for the entropy $\H$, Theorem~\ref{theo:fisher_upper_grad}
    \item
    an exact chain rule $\frac{d}{dt}\H(\mu_t)=[\dots]$ along $AC_2$ curves $(\mu_t)_{t\in [0,T]}$ under a natural time-integrability condition for the Fisher information, Theorem~\ref{theo:chain_rule_G}
\end{enumerate}
\bigskip
The rest of the paper is organized as follows:
in Section~\ref{sec:notation_results} we introduce our notations, we fix once and for all the setup to be used throughout the whole paper, and we state carefully our main results.
Section~\ref{sec:proofs} contains the proofs as well as a short discussion on possible extensions.
Appendix~\ref{sec:appendix} contains a few auxiliary technical lemmas.

\section{Notations and main results}
\label{sec:notation_results}
Let us fix here our notations and assumptions, enforced throughout the whole paper unless explicitly stated otherwise
\begin{itemize}
    \item 
$(M,g)$ is a smooth, compact manifold, possibly with boundary $\partial M\neq\emptyset$.
For notational convenience we abuse notations and indistinctly write $g_x(u,v)=\langle u,v\rangle =u \cdot v$ for the scalar product between two tangent vectors $u,v\in T_xM$.
Similarly, we omit the basepoint-dependence and simply write $|u|^2=g_x(u,u)$ for the squared tangent norm of $u\in T_xM$.
\item
The Riemannian gradient and Laplace-Beltrami operator are simply denoted $\nabla f,\Delta f$ for sufficiently smooth functions $f$.
\item
We always denote by $\Ric,\operatorname{I\!I}$ the Ricci curvature and second fundamental form on $M,\partial M$, respectively.
Here and throughout we assume Ricci and convexity lower bounds
\begin{equation}
    \label{eq:assumptions_ricci_curvature}
    \Ric(x)\geq -K\text{ on }M
    \qqtext{and}
    \operatorname{I\!I}(x)\geq -S \text{ on }\partial M
\end{equation}
for some finite $K,S\in \R$.
A convex domain simply means $S=0$, and unless explicitly stated otherwise we always consider non-convex domains $S>0$.
\item
The Riemannian volume form is denoted $V_g$, or simply $V$.
A generic probability measure over $M$ is denoted $\mu\in \P(M)$.
In case $\mu\ll V$ is absolutely continuous we write $\rho=\frac{\rd \mu}{\rd V}$ for the Radon-Nikodym density, i-e $\rd \mu(x)=\rho(x) \rd V(x)$ or simply $\mu=\rho V$.
\item
We write $\P^\infty(M)$ for the set of probability measures of the form $\mu=\rho V$ with $\rho\in C^\infty(M)$ bounded from above and from below.
\item
The intrinsic Riemannian distance is
$$
d^2(x,y)=\min\left\{\int_0^1 g(\dot\g_t,\dot\g_t)\rd t\text{ s.t. }\g_0=x,\g_1=y\right\}
$$
\item
We denote the corresponding Wasserstein distance by
$$
W^2(\mu,\nu)=\min\left\{\iint d^2(x,y)\,\rd\pi(x,y)\text{ s.t. }\pi\in\P(M\times M),\,\pi_x=\mu,\pi_y=\nu\right\}
$$
and refer e.g. to \cite{OTAM,AGS,villani2008optimal} for a detailed account on the theory of optimal transportation.
\item
We say that a curve $\mu=(\mu_t)_{t\in [0,T]}$ is absolutely continuous (relatively to the Wasserstein distance) and write $\mu\in AC([0,T];W)$ if there exists a function $m\in L^1(0,T)$ s.t.
\begin{equation}
\label{eq:def_AC}
W(\mu_{s},\mu_{t})
\leq
\int_s^t m(\tau)\,\rd \tau
\end{equation}
for all $0<s\leq t<T$.
In that case the \emph{metric speed}
$$
|\dot\mu_t|=\lim\limits_{h\to 0}\frac{W(\mu_{t},\mu_{t+h})}{h}
$$
exists a.e. $t\in (0,T)$ and is the smallest function satisfying \eqref{eq:def_AC}, see e.g. \cite[thm. 1.1.2]{AGS}.
We say that $\mu\in AC_p$, $p>1$ if moreover $|\dot\mu|\in L^p(0,T)$.
\item
The entropy $\H(\mu)$ of $\mu\in \P(M)$ is
$$
\H(\mu)=
\begin{cases}
    \int_M \rho \log\rho \,\rd V & \text{if }\mu=\rho V\\
    +\infty &\text{else}
\end{cases},
$$
sometimes also denoted $\H(\mu\vert V)$ (the relative entropy of $\mu$ w.r.t the reference volume measure $V$).
\item
We say that $\mu=\rho V$ has logarithmic derivative $u$ if the distributional gradient $\nabla\mu\in L^1_{loc}(M)$ is absolutely continuous w.r.t. $\mu$ with corresponding Radon-Nikodym derivative
\begin{equation}
\label{eq:log_derivative}
    u=\frac{\rd\nabla\mu}{\rd\mu}.
\end{equation}
In that case we write for convenience $u="\nabla\log\rho"$ with a clear abuse of notations.
Of course if $\rho(x)$ is smooth enough and positive then indeed $u=\nabla\log\rho$ coincides with the classical gradient of the well-defined function $\log\rho$.
\item
The Fisher information functional is
$$
\I(\mu)=\begin{cases}
\int_M |\nabla\log\rho|^2\rd\mu & \text{if }\mu\text{ has logarithmic derivative}
\\
+\infty &\text{else}.
\end{cases}
$$
Note that $\I(\mu)$ can be $+\infty$ if $\nabla\log\rho\not\in L^2(\mu)$.
Equivalently, one has the alternative representation
\begin{equation}
\label{eq:def_Fisher_H1_u}
\I(\mu)=
\begin{cases}
    4\int_M |\nabla u|^2\,\rd V & \text{if }\mu=\rho V\text{ with }u=\sqrt{\rho}\in H^1(M)\\
    +\infty &\text{else}
\end{cases},
\end{equation}
where $H^1(M)$ is the standard Sobolev space $H^1=W^{1,2}$.
\item
The Neumann heat-flow (with generator $\Delta$) of a function $f$ is denoted $P_t f$, while the dual flow acting on measures $\mu\in \P(M)$ is denoted $P^*_t\mu$.
For $\mu=\rho V$ the duality can be expressed as $P_t^*\mu=(P_t\rho) V$.
\item
We say that a $\P(M)$-valued curve $\mu=(\mu_t)_{t\in [0,T]}$ and a $TM$ vector-valued momentum measure $F=(F_t)_{t\in [0,T]}$ satisfy the continuity equation
\begin{equation}\label{eq:CE}
\partial_t\mu_t+\dive F=0
\end{equation}
with zero flux boundary condition on $\pM$ if
$$
\int_M \varphi \,\rd\mu_{t_1}-\int_M \varphi\, \rd\mu_{t_0}- \int_{t_0}^{t_1}\int_M \nabla\varphi\cdot \rd F\,\rd t=0
$$
for all $0\leq t_0\leq t_1\leq T$ and $\varphi\in C^1(M)$.
\item
For $a\in \R,b\in T_xM$ we denote
$$
A(a,b)
=\begin{cases}
\frac{|b|^2}{a} & \text{ if }a>0,\\
0 & \text{if }a=0,b=0,\\
+\infty & \text{else}.
\end{cases}
$$
Here we omit the $x$-dependence for simplicity, but typically one should keep in mind that we mean $|b|^2=g_x(b,b)$ whenever $b\in T_xM$.
Note that this is a convex, 1-homogeneous function.
Given a vector-valued momentum $F$ and $\mu\in \P(M)$ we write the kinetic action as
$$
\A(\mu,F)=\int_M\frac{|F|^2}{\mu}=\int_M A\left(\frac{\rd\mu}{\rd\lambda},\frac{\rd F}{\rd\lambda}\right) \rd\lambda
$$
for any positive measure $\lambda\in \M^+(M)$ such that $|F|,\mu\ll\lambda$.
By one homogeneity of $A$ this is independent of $\lambda$.
Whenever $\A(\mu,F)<+\infty$ it is easy to check that $F\ll\mu$, in which case one can simply take $\lambda=\mu$ and the kinetic action is best represented in terms of the velocity $v=\frac{\rd F}{\rd \mu}$ (rather than the momentum $F=\mu v$) as
$$
\A(\mu,F)=\int_M\frac{|F|^2}{\mu}=\int_M|v|^2\rd \mu,
\hspace{1cm}
F=\mu v.
$$
\end{itemize}
With this being said, our first result reads
\begin{theo}[Fisher information decay, convex domains]
\label{theo:Fisher_decay_convex}
Assume that $M$ is convex, i.e. $S=0$ with $\operatorname{I\!I}(x)\geq 0$, and let $\Ric(x)\geq -K$.
For any $\mu\in \P(M)$, the Neumann heat-flow $\mu_t=P^*_t\mu$ started from $\mu$ satisfies
$$
\I(\mu_t)\leq e^{2Kt}\I(\mu).
$$
\end{theo}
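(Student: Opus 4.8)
\emph{The plan.}
If $\I(\mu)=+\infty$ there is nothing to prove. Since smooth, strictly positive densities are $H^1(M)$-dense among those $\rho$ with $\sqrt\rho\in H^1(M)$, and since $\I$ is (weakly) lower semicontinuous while $\mu\mapsto P^*_t\mu$ is $W$-continuous, I would first reduce to $\mu=\rho V\in\P^\infty(M)$ and pass to the limit at the end. For such $\mu$ the density $\rho_t:=P_t\rho$ is smooth and, by positivity of the Neumann heat kernel on the compact $M$, bounded below by a positive constant, so $\I(\mu_t)=\int_M|\nabla\rho_t|^2/\rho_t\,\rd V$ is a genuine finite integral. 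The whole statement would then follow from the \emph{pointwise} inequality, valid for all $x\in M$,
\begin{equation}\label{eq:fisher-pointwise}
|\nabla P_t\rho(x)|^2\ \le\ e^{2Kt}\,(P_t\rho)(x)\,\big(P_t(|\nabla\rho|^2/\rho)\big)(x).
\end{equation}
Indeed, dividing \eqref{eq:fisher-pointwise} by $P_t\rho(x)>0$, integrating against $\rd V$, and using that the Neumann semigroup is self-adjoint on $L^2(M,V)$ with $P_t 1=1$ — hence $\int_M P_tg\,\rd V=\int_M g\,\rd V$ — turns it at once into $\I(\mu_t)\le e^{2Kt}\I(\mu)$.

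\emph{The probabilistic representation.}
To obtain \eqref{eq:fisher-pointwise} I would argue probabilistically. Let $(X^x_t)_{t\ge0}$ be the reflected (Neumann) Brownian motion on $M$ started at $x$, so that $P_tf(x)=\EE[f(X^x_t)]$, and use the Bismut--Elworthy--Li-type derivative formula for reflected diffusions,
\[
\nabla P_tf(x)=\EE\big[\,Q^x_t\,\nabla f(X^x_t)\,\big],
\]
where $Q^x_t$ is the damped derivative flow along the trajectory — identified through stochastic parallel transport with a linear map between the relevant tangent spaces — solving $\tfrac{\rd}{\rd t}Q_t=-\Ric(X_t)\,Q_t$ in the interior and deformed by $-\operatorname{I\!I}(X_t)\,Q_t$ against the boundary local time of $X$ on $\partial M$. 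Under $\Ric\ge-K$ and — crucially — convexity $\operatorname{I\!I}\ge0$, the boundary contribution to $\tfrac{\rd}{\rd t}|Q_t|^2$ is nonpositive, yielding the deterministic operator bound $\|Q^x_t\|\le e^{Kt}$ for all $t\ge0$.

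\emph{The key Cauchy--Schwarz step.}
Since $\rho>0$, combining the derivative formula with Jensen's inequality and then a weighted Cauchy--Schwarz inequality gives
\begin{align*}
|\nabla P_t\rho(x)|
&\le\EE\big[\|Q^x_t\|\,|\nabla\rho(X^x_t)|\big]
 =\EE\Big[\|Q^x_t\|\sqrt{\rho(X^x_t)}\cdot\frac{|\nabla\rho(X^x_t)|}{\sqrt{\rho(X^x_t)}}\Big]\\
&\le\sqrt{\EE\big[\|Q^x_t\|^2\rho(X^x_t)\big]}\,\sqrt{\EE\big[|\nabla\rho(X^x_t)|^2/\rho(X^x_t)\big]}
 \le e^{Kt}\sqrt{(P_t\rho)(x)}\,\sqrt{\big(P_t(|\nabla\rho|^2/\rho)\big)(x)},
\end{align*}
where in the last step I used $\|Q^x_t\|\le e^{Kt}$ and $P_tg(x)=\EE[g(X^x_t)]$. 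Squaring yields \eqref{eq:fisher-pointwise}, and the theorem follows.

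\emph{Expected main obstacle.}
The delicate point is not the two-line computation above but the rigorous justification of the derivative formula for the \emph{reflected} Brownian motion and of the bound $\|Q^x_t\|\le e^{Kt}$: one must control the boundary local time term in the equation driving $Q_t$, and it is exactly convexity $\operatorname{I\!I}\ge0$ (i.e. $S=0$) that forces this term to carry the favourable sign — this is the single place where convexity is used. Since this ingredient is classical for reflected diffusions, I would quote it from the literature; with it in hand, the rest is only the Cauchy--Schwarz estimate and the mass invariance of the Neumann semigroup. One could alternatively establish the underlying gradient estimate $|\nabla P_tf|^2\le e^{2Kt}P_t(|\nabla f|^2)$ by an analytic $\Gamma_2$/semigroup-interpolation argument, but the probabilistic route makes the role of the boundary geometry most transparent.
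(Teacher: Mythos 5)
Your proposal is correct and follows essentially the same route as the paper: the paper's proof quotes the $L^1$-gradient estimate $|\nabla P_t f|\le e^{Kt}P_t|\nabla f|$ (valid for convex $M$) from Wang's book and then applies Jensen's inequality for the jointly convex, one-homogeneous function $A(a,b)=|b|^2/a$, which is precisely the pointwise bound $|\nabla P_t\rho|^2\le e^{2Kt}\,P_t\rho\cdot P_t(|\nabla\rho|^2/\rho)$ that you obtain by unpacking that gradient estimate via the Bismut--Elworthy--Li derivative formula for reflected Brownian motion and then running a weighted Cauchy--Schwarz. The only cosmetic differences are that you spell out the probabilistic mechanism behind the cited gradient estimate and that you explicitly insert the approximation reduction to $\P^\infty(M)$ (which the paper defers to its proof of the non-convex Theorem 2); the final integration against $\rd V$ using mass invariance of the Neumann semigroup is identical.
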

Although we are mainly interested in possibly non-convex domains ($S>0$) we could not find anywhere in the literature the explicit statement in this general form, so we decided to include this auxiliary result both for the sake of completeness and further bibliographical convenience.

Focusing now on non-convex domains, our next result is the exact analogue on non-convex domains and is completely new, even in a PDE framework.
\begin{theo}[Fisher information decay, non-convex domains]
\label{theo:Fisher_decay}
Assume that $M$ satisfies $\operatorname{I\!I}(x)\geq -S$ with $S>0$, and any Ricci lower bound.
There exists a small $t_0>0$ such that, for any $\mu\in \P(M)$, the Neumann heat-flow $\mu_t=P^*_t\mu$ started from $\mu$ satisfies
$$
\I(\mu_t)\leq e^{4S\sqrt{t/\pi}+\mathcal O(t)}\I(\mu),
\hspace{1cm} \forall\,t\in [0,t_0].
$$
Here the $\mathcal O(t)$ term is uniform in $\mu\in \P(M)$.
\end{theo}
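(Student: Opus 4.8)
The plan is to reduce the assertion to a short-time estimate for an exponential moment of the boundary local time of the reflecting Brownian motion, via a duality argument built on the Bismut-type derivative formula for the Neumann heat semigroup — the same probabilistic mechanism underlying Theorem~\ref{theo:Fisher_decay_convex}, only now carrying along the genuine boundary contribution instead of discarding it by convexity.

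\emph{Step 1: variational formula and duality.} I would first record that, for $\mu=\rho V$ and any smooth $\varphi$ with $\partial_n\varphi=0$ on $\pM$,
$$
2\int_M\Delta\varphi\,\rd\mu-\int_M|\nabla\varphi|^2\,\rd\mu=\I(\mu)-\int_M\big|\nabla\varphi+\nabla\log\rho\big|^2\,\rd\mu\;\le\;\I(\mu),
$$
with equality as soon as $\rho$ is smooth, positive and satisfies the Neumann condition (take $\varphi=\log\rho$). By parabolic regularity and the strong maximum principle, $\mu_t=P_t^*\mu$ has such a density for every $t>0$, so $\I(\mu_t)=\sup_\varphi\{2\int\Delta\varphi\,\rd\mu_t-\int|\nabla\varphi|^2\,\rd\mu_t\}$. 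Writing $\int\Delta\varphi\,\rd\mu_t=\int\Delta(P_t\varphi)\,\rd\mu$ and $\int|\nabla\varphi|^2\,\rd\mu_t=\int P_t(|\nabla\varphi|^2)\,\rd\mu$, then inserting any pointwise gradient estimate $|\nabla P_t\varphi|^2\le C(t)\,P_t(|\nabla\varphi|^2)$, changing the test field to $\eta=P_t\varphi$ (admissible, as $P_t$ is the Neumann semigroup) and rescaling $\eta\mapsto C(t)\eta$, one is left with $\I(\mu_t)\le C(t)\,\I(\mu)$, using the inequality above at $\mu$.

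\emph{Step 2: identifying $C(t)$.} As in the proof of Theorem~\ref{theo:Fisher_decay_convex}, I would use the derivative formula $\nabla P_t\varphi(x)=\EE^x[Q_t^*\nabla\varphi(X_t)]$ along the reflecting Brownian motion $(X_s)$ with generator $\Delta$, where the multiplicative operator $Q_s$ evolves by $-Q_s\Ric$ in the interior and is deformed by $-Q_s\operatorname{I\!I}$ against the boundary local time $(L_s)$; the bounds $\Ric\ge-K$ and $\operatorname{I\!I}\ge-S$ give $\|Q_t\|\le e^{Kt+SL_t}$. Cauchy–Schwarz yields $|\nabla P_t\varphi(x)|^2\le\EE^x[e^{2Kt+2SL_t}]\,P_t(|\nabla\varphi|^2)(x)$, so one may take $C(t)=e^{2Kt}\sup_{x\in M}\EE^x[e^{2S L_t}]$, and the whole theorem reduces to proving $\sup_x\EE^x[e^{2S L_t}]\le e^{4S\sqrt{t/\pi}+\mathcal O(t)}$ for $t\in[0,t_0]$. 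The key point to notice is that Cauchy–Schwarz is \emph{not} wasteful here: it produces exactly the first moment $\EE^x[e^{2SL_t}]$, whose leading short-time exponent is $2\cdot(2S)\sqrt{t/\pi}=4S\sqrt{t/\pi}$, matching the statement.

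\emph{Step 3: the boundary local time moment.} The supremum over $x$ is governed by starting points in a fixed collar $\pM\times[0,\delta)$, where in normal coordinates $r=\dist(\cdot,\pM)$ satisfies $|\nabla r|\equiv1$ and $|\Delta r|\le b_0$; hence, up to the first exit from the collar, $r(X_s)$ is a reflected Brownian motion on $[0,\delta)$ with bounded drift and $L_t$ equals, up to the $1+\mathcal O(\delta)$ Jacobian of the collar foliation, the local time of $r(X_s)$ at $0$. Comparing (so as to push the process towards $\{r=0\}$) and then removing the bounded drift by a Girsanov/Hölder estimate — each costing only a factor $e^{\mathcal O(t)}$, since $\sup_x\EE^x[L_t]=\mathcal O(\sqrt t)$ — bounds $\EE^x[e^{2SL_t}]$ by the boundary value of the solution of the one-dimensional heat equation on $(0,\infty)$ with Robin condition of coefficient $2S$, which is explicit:
$$
e^{(2S)^2 t}\,\operatorname{erfc}\!\big(-2S\sqrt t\,\big)=1+\tfrac{4S}{\sqrt\pi}\sqrt t+\mathcal O(t)=e^{4S\sqrt{t/\pi}+\mathcal O(t)} .
$$
Combined with Step~1 this gives $\I(\mu_t)\le e^{4S\sqrt{t/\pi}+\mathcal O(t)}\I(\mu)$ with $\mathcal O(t)$ independent of $\mu$. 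The main obstacle is precisely this Step~3: one must make the collar comparison quantitative enough that the curvature of the boundary foliation, the Jacobian distortion and the in-and-out excursions of $X_s$ all contribute only $\mathcal O(t)$ — not $\mathcal O(\sqrt t)$ — to the exponent, so that the leading constant is exactly $4S/\sqrt\pi$; the leverage is that $L_t$ itself is of order $\sqrt t$, so multiplicative $1+\mathcal O(\sqrt t)$ distortions of the local time turn into additive $\mathcal O(t)$ errors after taking logarithms. As a cross-check, a purely analytic variant would differentiate $t\mapsto\I(\mu_t)$, use the Bochner identity with its boundary term $-2\int_{\pM}\operatorname{I\!I}(\nabla\log\rho_t,\nabla\log\rho_t)\,\rd\mu_t$, and close a Grönwall inequality through the short-time trace estimate $\int_{\pM}|\nabla\log\rho_t|^2\,\rd\mu_t\le(\tfrac{1}{\sqrt{\pi t}}+\mathcal O(1))\,\I(\mu_t)$ — but that estimate rests on the very same one-dimensional heat-kernel computation, which is why I would favour the duality route above.
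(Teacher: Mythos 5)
Your Step~1 is correct and amounts to a dual (variational) rewriting of the paper's ``Proof~II'': where the paper plugs the specific test field $\log\rho_t$ directly into the identity and chases the algebra (duality of $P_t,P_t^*$, integration by parts, the gradient estimate \eqref{eq:GE}, then $\eps$-Young), you keep $\varphi$ general, pass to the representation $\I(\nu)=\sup_\varphi\bigl\{2\int\Delta\varphi\,\rd\nu-\int|\nabla\varphi|^2\,\rd\nu\bigr\}$, substitute $\eta=P_t\varphi$, and conclude by the homogeneity rescaling $\eta\mapsto C(t)\eta$. This is both slicker and slightly more robust: you only need the one-sided inequality $2\int\Delta\eta\,\rd\mu-\int|\nabla\eta|^2\,\rd\mu\le\I(\mu)$ at $\mu$ (which holds whenever $\sqrt\rho\in H^1$) and the exact sup-representation at the smoothed $\mu_t$ (valid by parabolic regularity and the strong maximum principle, so $-\log\rho_t$ is an admissible Neumann test field). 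The paper instead first proves the estimate on $\P^\infty(M)$ and then runs a separate approximation step using $u^n\to\sqrt\rho$ in $H^1$, Wasserstein contraction \eqref{eq:WC}, and lower semicontinuity of $\I$; your route dispenses with that step. The paper also gives a logically independent ``Proof~I'' via \eqref{eq:WC} on short-time increments and the Benamou--Brenier identification $\lim_{h\to 0}W^2(\mu_{t+h},\mu_t)/h^2=\I(\mu_t)$, which you do not consider.

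The gap is in Steps~2--3. The estimate you set out to prove there, namely $|\nabla P_t f|^2\le e^{4S\sqrt{t/\pi}+\mathcal O(t)}P_t(|\nabla f|^2)$, is precisely \eqref{eq:GE}, which the paper takes as a black box from Sturm \cite[Thm.~1.1]{sturm2025gradient}; you are therefore re-deriving the key imported ingredient rather than supplying a new one. Your probabilistic mechanism is the natural one (Bismut formula with multiplicative functional bounded by $e^{Kt+SL_t}$, Cauchy--Schwarz reducing the problem to $\sup_x\EE^x[e^{2SL_t}]$, a half-line Robin model giving $e^{(2S)^2t}\operatorname{erfc}(-2S\sqrt t)=e^{4S\sqrt{t/\pi}+\mathcal O(t)}$, and the leading coefficient does check out). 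But, as you acknowledge, Step~3 is an \emph{obstacle}, not a proof: the collar comparison has to be quantitative enough that the Jacobian distortion of the local time, the bounded drift of $r(X_s)$, and excursions out of the collar contribute only additive $\mathcal O(t)$ to the exponent. A multiplicative $(1+\mathcal O(\delta))$ error on $L_t$ with $\delta$ fixed would already corrupt the $\sqrt t$ coefficient, so one must let $\delta\to 0$ with $t$ while controlling exit events. That uniform control is exactly the technical content of Sturm's theorem. The shortest rigorous form of your argument is Step~1 together with a citation to \eqref{eq:GE}.
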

We believe this an interesting estimate in its own right, but we will mainly use as a technical tool to establish our second main result:
\begin{theo}[Fisher is a strong upper gradient]
\label{theo:fisher_upper_grad}
The square-root Fisher Information
$$
\mathfrak g(\mu)\coloneqq \sqrt{\I(\mu)}
$$
is a strong Wasserstein upper gradient for the entropy $\H$, i-e, if $\mu\in AC([0,T];W)$ then
\begin{equation}
\label{eq:chain_rule_upper_grad}
\left|\H(\mu_{t_1})-\H(\mu_{t_0})\right|
\leq \int_{t_0}^{t_1}\mathfrak g(\mu_t)|\dot\mu_t|\,\rd t,
\qquad
\forall\,0<t_0\leq t_1<T.
\end{equation}
\end{theo}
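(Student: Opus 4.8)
The plan is to regularise the curve in space by the Neumann heat flow, to apply the elementary chain rule for curves of smooth positive densities, and then to pass to the limit, with Theorem~\ref{theo:Fisher_decay} playing the role of a quantitative substitute for the Wasserstein contraction of the heat flow (which is available only in convex domains). As a first reduction, we may assume that the right-hand side of \eqref{eq:chain_rule_upper_grad} is finite; then $\mu_t=\rho_tV$ with $\sqrt{\rho_t}\in H^1(M)$ for a.e.\ $t$, and hence (compactness of $M$ and the Sobolev embedding) $\H(\mu_t)\in\R$ for those $t$. By a routine approximation, using narrow continuity of $t\mapsto\mu_t$ and lower semicontinuity of $\H$, it then suffices to prove \eqref{eq:chain_rule_upper_grad} under the additional assumption $\H(\mu_{t_0}),\H(\mu_{t_1})<+\infty$. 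Finally, by the Benamou--Brenier representation of absolutely continuous curves \cite{lisini2007characterization}, fix a momentum $F=(F_t)_t$ with $\partial_t\mu_t+\dive F_t=0$ (zero flux) and $\A(\mu_t,F_t)=|\dot\mu_t|^2$ for a.e.\ $t$.

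For $s\in(0,s_0]$, with $s_0$ the time provided by Theorem~\ref{theo:Fisher_decay}, set $\mu^s_t\coloneqq P^*_s\mu_t=\rho^s_tV$, where $\rho^s_t(x)=\int_M p_s(x,y)\,\rd\mu_t(y)$ and $p_s\in C^\infty$ is the positive Neumann heat kernel. Parabolic regularity and the strong maximum principle give $\rho^s_t\in C^\infty(M)$ with $0<c_s\le\rho^s_t\le C_s<+\infty$ uniformly in $t$, while $t\mapsto\rho^s_t$ is absolutely continuous into $C^k(M)$ for each $k$, being a fixed smooth kernel applied to the $W$-absolutely continuous curve $\mu$. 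The crucial point is the short-time control
\begin{equation}
\label{eq:speed-bound}
|\dot\mu^s_t|\ \le\ e^{2S\sqrt{s/\pi}+\mathcal O(s)}\,|\dot\mu_t|\qquad\text{for a.e.\ }t,
\end{equation}
the $\mathcal O(s)$ being uniform in $t$ (and in $\mu$). Indeed, pushing the continuity equation through $P^*_s$ produces $\partial_t\mu^s_t+\dive F^s_t=0$ for the heat-transported momentum $F^s_t$, so that $|\dot\mu^s_t|^2\le\A(\mu^s_t,F^s_t)$, and the bound $\A(\mu^s_t,F^s_t)\le e^{4S\sqrt{s/\pi}+\mathcal O(s)}\A(\mu_t,F_t)$ follows by exactly the mechanism behind Theorem~\ref{theo:Fisher_decay}: joint convexity and $1$-homogeneity of $A(\cdot,\cdot)$, Jensen's inequality along the heat semigroup (equivalently, along reflected Brownian motion carrying the damped parallel transport), and the $\mathcal O(\sqrt s)$-control of the boundary local time furnished by $\operatorname{I\!I}\ge-S$. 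In fact Theorem~\ref{theo:Fisher_decay} \emph{is} this inequality for the distinguished choice $F_t=-\nabla\rho_t$, whose kinetic action $\A(\rho_tV,-\nabla\rho_t)$ equals $\I(\mu_t)$.

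Because $\mu^s\in AC([0,T];W)$ has smooth positive densities pinched between two constants, the chain rule for such curves gives that $t\mapsto\H(\mu^s_t)$ is absolutely continuous on $[t_0,t_1]$ with
\begin{equation}
\label{eq:reg-estimate}
\Bigl|\frac{\rd}{\rd t}\H(\mu^s_t)\Bigr|\ \le\ \sqrt{\I(\mu^s_t)}\;|\dot\mu^s_t|\ \le\ e^{4S\sqrt{s/\pi}+\mathcal O(s)}\,\sqrt{\I(\mu_t)}\;|\dot\mu_t|\qquad\text{for a.e.\ }t,
\end{equation}
where the first inequality is Cauchy--Schwarz in $L^2(\mu^s_t)$ applied to $\frac{\rd}{\rd t}\H(\mu^s_t)=\int_M\nabla\log\rho^s_t\cdot w^s_t\,\rd\mu^s_t$ (with $w^s_t$ the minimal velocity, whose Neumann condition kills the boundary term in the integration by parts and whose $L^2(\mu^s_t)$-norm equals $|\dot\mu^s_t|$), and the second combines \eqref{eq:speed-bound} with $\I(\mu^s_t)\le e^{4S\sqrt{s/\pi}+\mathcal O(s)}\I(\mu_t)$ from Theorem~\ref{theo:Fisher_decay}. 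Integrating \eqref{eq:reg-estimate} over $[t_0,t_1]$ and letting $s\downarrow0$ finishes the proof: the right-hand side converges to $\int_{t_0}^{t_1}\sqrt{\I(\mu_t)}|\dot\mu_t|\,\rd t$, while $\H(\mu^s_{t_i})\to\H(\mu_{t_i})$ for $i=0,1$ because $s\mapsto\H(\mu^s_t)$ is non-increasing (entropy dissipates along the Neumann heat flow, $\frac{\rd}{\rd s}\H(\mu^s_t)=-\I(\mu^s_t)\le0$, needing no convexity), so that $\sup_{s>0}\H(\mu^s_t)=\lim_{s\downarrow0}\H(\mu^s_t)\le\H(\mu_t)$, whereas narrow convergence $\mu^s_t\to\mu_t$ and lower semicontinuity of $\H$ give the matching lower bound. (The uniform estimate \eqref{eq:reg-estimate} moreover yields absolute continuity of $\H\circ\mu$ whenever the right-hand side of \eqref{eq:chain_rule_upper_grad} is finite, which is the ``strong'' part of the statement.)

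The main obstacle is \eqref{eq:speed-bound} in the genuinely non-convex regime $S>0$. When $S=0$ it is the textbook fact that the Neumann heat flow is a Wasserstein contraction up to $e^{Ks}$ and does not increase the kinetic action; when $S>0$ this contraction fails outright, and the whole difficulty is to quantify the defect --- the $e^{\mathcal O(\sqrt s)}$ loss, whose exponent is dictated at leading order by the boundary second fundamental form --- which is precisely the content of Theorem~\ref{theo:Fisher_decay} and its momentum-level reading. The remaining ingredients --- the chain rule and boundary-free integration by parts for smooth positive densities in a Neumann domain, the absolute-continuity and measurability bookkeeping, and the routine handling of possibly infinite endpoint entropies --- are standard once the regularised densities are known to be smooth and uniformly pinched between two positive constants.
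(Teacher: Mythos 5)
Your proof is correct and follows essentially the same strategy as the paper's: spatial regularization via the Neumann heat flow $\mu^s_t=P^*_s\mu_t$, the quantified speed bound coming from Sturm's Wasserstein contraction, Cauchy--Schwarz together with the Benamou--Brenier representation, and Theorem~\ref{theo:Fisher_decay} to control $\I(\mu^s_t)\le e^{4S\sqrt{s/\pi}+\mathcal O(s)}\I(\mu_t)$, followed by the $s\downarrow 0$ limit handled through Jensen (entropy decreases along the heat flow) and lower semicontinuity. The one genuine difference is your treatment of the regularized chain rule: the paper carries out an additional time mollification $\mu^{\eps,\delta}_t=\eta^\delta\ast\mu^\eps_t$ to make the densities jointly smooth in $(t,x)$ before differentiating $\H(\mu^{\eps,\delta}_t)$, and then passes $\delta\to 0$ via Vitali. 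You instead observe that, $p_s$ being a fixed smooth kernel, $t\mapsto\rho^s_t$ is Lipschitz with values in $C^k(M)$ as a function of the Wasserstein distance (Kantorovich duality applied to $D^\alpha_x p_s(x,\cdot)$), which gives a.e.~time differentiability directly and lets you skip the time mollification altogether. This is a mild but genuine streamlining, and buys you one fewer limit to justify; what it costs is that the justification of the pointwise chain rule for a curve that is only $AC$ (not smooth) in time, with a Benamou--Brenier velocity that is merely $L^2$ in time, requires a little more care than ``textbook fact'' suggests --- one should argue through the weak continuity equation tested against $\log\rho^s_{t_*}$ together with the Leibniz rule for the product of two $AC$ functions, and this deserves to be spelled out. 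One small presentational remark: in justifying the speed bound \eqref{eq:speed-bound} you invoke a ``heat-transported momentum $F^s_t$'' and a kinetic-action contraction ``by exactly the mechanism behind Theorem~\ref{theo:Fisher_decay}''. There is no canonical heat-transport of a momentum (since $P_s$ and $\nabla$ do not commute on manifolds), and the cleaner route, which the paper takes and which your appeal to boundary local time is ultimately secretly using, is just $W(\mu^s_{t+h},\mu^s_t)\le e^{2S\sqrt{s/\pi}+\mathcal O(s)}W(\mu_{t+h},\mu_t)$ from \eqref{eq:WC} divided by $h$; the optimal velocity $w^s_t$ is then recovered a posteriori via Lemma~\ref{lem:BBnon-convex}. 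This is not a gap in your argument, merely an unnecessary detour that would be worth replacing by the direct contraction estimate.
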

This is typically required to guarantee that, when using $\mathfrak g(\mu)=\sqrt{\I}(\mu)$ as dissipation measurement, EDI is a good notion of solution.
For example this shows that an EDI solution in time interval $[0,T]$ is in fact a stronger \emph{Energy Dissipation Equality} (EDE) solution (and not just EDI) in any subinterval $[t_0,t_1]\subseteq [0,T]$, see e.g. \cite[Appendix A]{cances2024discretizing} for a detailed exposition in the case of Fokker-Planck equations.
Moreover, since $\I(\mu)$ and $\A(\mu,F)$ are convex in $\mu$ and $(\mu,F)$, respectively, our upper chain rule \eqref{eq:chain_rule_upper_grad} can be used together with a trick by N. Gigli \cite{gigli2010heat} to show that EDI solutions are unique, see again \cite[Appendix A]{cances2024discretizing}.
\\

Our last main result is a more PDE-oriented and exact chain-rule:
\begin{theo}[exact chain rule under $AC_2$/Fisher conditions]
\label{theo:chain_rule_G}
Assume that $\mu=(\mu_t)_{t\in[0,T]}$ solves the continuity equation $\partial_t\mu+\dive G=0$ with finite kinetic energy
\begin{equation}
\label{eq:finite_energy_G}
   \int_0^T \int_M\frac{|G_t|^2}{\mu_t}\rd t<+\infty
\end{equation}
and Fisher information
\begin{equation}
\label{eq:finite_Fisher}
    \int_0^T \I(\mu_t)\rd t<+\infty.
\end{equation}
Assume moreover that, for a.e. $t\in (0,T)$, the velocity field $u_t=\nabla\log\rho_t\in L^2(\mu_t)$ belongs to $\overline{\{\nabla\phi,\,\phi\in C^1(M)\}}^{L^2(\mu_t)}$.
Then $t\mapsto \H(\mu_t)$ is absolutely continuous and
\begin{equation}
\label{eq:chain_rule_G}
    \frac{d}{dt}\H(\mu_t)=\int_M \nabla \log\rho_t \cdot \rd G_t 
\end{equation}
for a.e. $t\in (0,T)$, the right-hand side belonging to $L^1(0,T)$.
\end{theo}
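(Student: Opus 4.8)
The plan is to reduce to smooth densities by regularizing the curve with the Neumann heat flow, run the classical chain rule along the regularized curve, and then pass to the limit; Theorem~\ref{theo:Fisher_decay} is exactly the quantitative ingredient that makes this limiting procedure harmless. First, the right-hand side of \eqref{eq:chain_rule_G} is well defined and lies in $L^1(0,T)$: for a.e.\ $t$ one has $\I(\mu_t)<+\infty$, so $\nabla\log\rho_t\in L^2(\mu_t)$, and \eqref{eq:finite_energy_G} forces $G_t=v_t\mu_t$ with $v_t\in L^2(\mu_t)$, so by Cauchy--Schwarz in $L^2(\mu_t)$ and then in time
\begin{equation*}
\int_0^T\Big|\int_M\nabla\log\rho_t\cdot\rd G_t\Big|\,\rd t\le\int_0^T\sqrt{\I(\mu_t)}\,\Big(\int_M\frac{|G_t|^2}{\mu_t}\Big)^{1/2}\rd t<+\infty
\end{equation*}
by \eqref{eq:finite_Fisher}--\eqref{eq:finite_energy_G}. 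Moreover $\H\ge-V(M)/e>-\infty$ always, and whenever $\I(\mu_t)<+\infty$ the $H^1$ representation \eqref{eq:def_Fisher_H1_u} together with the Sobolev embedding $H^1(M)\hookrightarrow L^{2q}(M)$ for some $q>1$ gives $\rho_t\in L^q(M)$, hence $\H(\mu_t)<+\infty$; since $\mu\in AC_2([0,T];W)$ (with $|\dot\mu_t|^2\le\int_M|G_t|^2/\mu_t$) and $\sqrt{\I(\mu_\cdot)}\,|\dot\mu_\cdot|\in L^1(0,T)$, Theorem~\ref{theo:fisher_upper_grad} already shows that $t\mapsto\H(\mu_t)$ is finite and absolutely continuous on $(0,T)$; it remains only to identify its derivative.

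Fix a small $\eps>0$ and set $\mu^\eps_t\coloneqq P^*_\eps\mu_t=(P_\eps\rho_t)V$. On the compact manifold $M$ the Neumann heat kernel satisfies $0<\inf p_\eps\le p_\eps\le\sup p_\eps<+\infty$, so $\rho^\eps_t=P_\eps\rho_t$ is smooth and pinched between two positive constants uniformly in $t$; thus $\mu^\eps_\cdot$ is $\P^\infty(M)$-valued. Testing the continuity equation against $P_\eps\varphi$ gives $\partial_t\mu^\eps_t+\dive G^\eps_t=0$ with zero flux, the momentum $G^\eps_t$ being characterised by $\int_M\nabla\varphi\cdot\rd G^\eps_t=\int_M\nabla P_\eps\varphi\cdot v_t\,\rd\mu_t$ for $\varphi\in C^1(M)$. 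A Bakry--\'Emery type gradient bound $|\nabla P_\eps\varphi|\le C(\eps)\,P_\eps|\nabla\varphi|$ with $C(\eps)\to1$ as $\eps\downarrow0$ ($C(\eps)\equiv1$ if $S=0$, the non-convex case resting on the same stochastic analysis as Theorem~\ref{theo:Fisher_decay}), combined with Jensen, bounds this functional by $C(\eps)\|\nabla\varphi\|_{L^2(\mu^\eps_t)}\|v_t\|_{L^2(\mu_t)}$, so by Riesz duality $G^\eps_t=v^\eps_t\mu^\eps_t$ with $\int_M|G^\eps_t|^2/\mu^\eps_t\le C(\eps)^2\int_M|G_t|^2/\mu_t$; while Theorem~\ref{theo:Fisher_decay} yields directly $\I(\mu^\eps_t)=\I(P^*_\eps\mu_t)\le e^{4S\sqrt{\eps/\pi}+\mathcal O(\eps)}\I(\mu_t)$. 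In particular, for $\eps$ small, $\int_0^T\int_M|G^\eps_t|^2/\mu^\eps_t\,\rd t$ and $\int_0^T\I(\mu^\eps_t)\,\rd t$ stay bounded.

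Since $\rho^\eps_\cdot$ is smooth and bounded away from $0$ and $\infty$ uniformly in $t$, with $\partial_t\rho^\eps=-\dive G^\eps\in L^2((0,T);L^2)$, classical first-order calculus gives, for a.e.\ $t$,
\begin{equation*}
\frac{d}{dt}\H(\mu^\eps_t)=\int_M(1+\log\rho^\eps_t)\,\partial_t\rho^\eps_t\,\rd V=-\int_M\log\rho^\eps_t\,\dive G^\eps_t\,\rd V=\int_M\nabla\log\rho^\eps_t\cdot\rd G^\eps_t,
\end{equation*}
the last equality being integration by parts of the $C^1$ test function $\log\rho^\eps_t$ against the zero-flux momentum $G^\eps_t$; the right-hand side is in $L^1(0,T)$ by the previous paragraph, so $\H(\mu^\eps_{t_1})-\H(\mu^\eps_{t_0})=\int_{t_0}^{t_1}\int_M\nabla\log\rho^\eps_t\cdot\rd G^\eps_t\,\rd t$. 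Now let $\eps\downarrow0$. For each $t$, $\mu^\eps_t\to\mu_t$ narrowly and $\rho^\eps_t\to\rho_t$ in $L^1(V)$, so $\sqrt{\rho^\eps_t}\to\sqrt{\rho_t}$ in $L^2(V)$; combining $\limsup\I(\mu^\eps_t)\le\I(\mu_t)$ (Theorem~\ref{theo:Fisher_decay}) with the narrow lower semicontinuity of $\I$ gives $\I(\mu^\eps_t)\to\I(\mu_t)$, which upgrades the previous convergence to $\sqrt{\rho^\eps_t}\to\sqrt{\rho_t}$ strongly in $H^1(M)$, hence $\nabla\rho^\eps_t\to\nabla\rho_t$ in $L^1(V)$. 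Fix $t$ with $\I(\mu_t)<+\infty$ and $u_t:=\nabla\log\rho_t\in\overline{\{\nabla\phi:\phi\in C^1(M)\}}^{L^2(\mu_t)}$, let $\eta>0$ and pick $\phi\in C^1(M)$ with $\|u_t-\nabla\phi\|_{L^2(\mu_t)}\le\eta$; writing $u^\eps_t=\nabla\log\rho^\eps_t$,
\begin{equation*}
\int_M\nabla\log\rho^\eps_t\cdot\rd G^\eps_t=\int_M\nabla P_\eps\phi\cdot v_t\,\rd\mu_t+\int_M(u^\eps_t-\nabla\phi)\cdot v^\eps_t\,\rd\mu^\eps_t.
\end{equation*}
The first term tends to $\int_M\nabla\phi\cdot v_t\,\rd\mu_t=\int_M\nabla\phi\cdot\rd G_t$ because $\nabla P_\eps\phi\to\nabla\phi$ pointwise in the interior with a uniform $L^\infty$ bound, hence in $L^2(\mu_t)$; the second is at most $\|u^\eps_t-\nabla\phi\|_{L^2(\mu^\eps_t)}\|v^\eps_t\|_{L^2(\mu^\eps_t)}$, where $\|v^\eps_t\|_{L^2(\mu^\eps_t)}\le C(\eps)\|v_t\|_{L^2(\mu_t)}$ stays bounded and, by the convergences above and narrow convergence,
\begin{equation*}
\|u^\eps_t-\nabla\phi\|_{L^2(\mu^\eps_t)}^2=\I(\mu^\eps_t)-2\int_M\nabla\rho^\eps_t\cdot\nabla\phi\,\rd V+\int_M|\nabla\phi|^2\,\rd\mu^\eps_t\ \xrightarrow[\eps\to0]{}\ \|u_t-\nabla\phi\|_{L^2(\mu_t)}^2\le\eta^2.
\end{equation*}
Letting $\eps\downarrow0$ and then $\eta\downarrow0$ shows $\int_M\nabla\log\rho^\eps_t\cdot\rd G^\eps_t\to\int_M\nabla\log\rho_t\cdot\rd G_t$ for a.e.\ $t$; since this integrand is dominated, uniformly for $\eps$ small, by $C\sqrt{\I(\mu_t)}(\int_M|G_t|^2/\mu_t)^{1/2}\in L^1(0,T)$, and $\H(\mu^\eps_{t_i})\to\H(\mu_{t_i})$ (narrow lower semicontinuity of $\H$ and $\H(P^*_\eps\mu_{t_i})\le\H(\mu_{t_i})<+\infty$ at a.e.\ $t_i$), dominated convergence in time gives $\H(\mu_{t_1})-\H(\mu_{t_0})=\int_{t_0}^{t_1}\int_M\nabla\log\rho_t\cdot\rd G_t\,\rd t$ for a.e., hence (by the absolute continuity from the first paragraph) all, $0<t_0\le t_1<T$, and \eqref{eq:chain_rule_G} follows by differentiation.

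The main difficulty is the convergence of the integrand just described: it equals $\int_M u^\eps_t\cdot v^\eps_t\,\rd\mu^\eps_t$, a pairing of a factor ($u^\eps_t$) that converges only strongly with a factor ($v^\eps_t$, hidden inside $G^\eps_t$) that converges only weakly-$*$, so the product does not pass to the limit for free; the hypothesis $u_t\in\overline{\{\nabla\phi\}}^{L^2(\mu_t)}$ is exactly what repairs this, by letting us trade the troublesome factor for a fixed smooth gradient — against which weak-$*$ convergence of $G^\eps_t$ suffices — leaving an $L^2$-controlled remainder, and it is also the reason why the statement is naturally tailored to JKO-type curves. The secondary technical point is the uniform-in-$\eps$ control of the regularized action and Fisher information, where the non-convexity of $\partial M$ enters and Theorem~\ref{theo:Fisher_decay} together with its companion gradient estimate is used; this is precisely what keeps the regularization quantitatively harmless and supplies the $L^1$-domination in time.
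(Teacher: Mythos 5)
Your proof is essentially correct but takes a genuinely different route from the paper's. The paper regularizes the curve by $\mu^\eps_t=P^*_\eps\mu_t$, then \emph{re-solves} the regularized curve via Benamou--Brenier (Lemma~\ref{lem:BBnon-convex}) to obtain an optimal velocity $v^\eps$, performs an additional time-mollification $\eta^\delta\ast$, derives the chain rule for the doubly regularized curve, passes to the limit $\delta\to 0$ by uniform integrability, and finally passes to the limit $\eps\to 0$ via the bespoke weak--strong convergence Lemma~\ref{lem:weak_strong_CV}: $u^\eps=\nabla\log\rho^\eps$ converges strongly in the varying-measure sense (norms converge), $v^\eps$ converges only weakly, and their pairing passes to the limit. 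The closure-of-gradients hypothesis is invoked only at the very end, to replace the abstract limit momentum $F=\mu v$ by the given $G$ using $\dive F=\dive G$. Your construction instead regularizes the \emph{given} momentum directly by duality, $\int\nabla\varphi\cdot\rd G^\eps_t=\int\nabla P_\eps\varphi\cdot\rd G_t$, skips the time-mollification (arguing that $\partial_t\rho^\eps=-\dive G^\eps$ is already nice because $P^*_\eps$ smooths the distribution $\dive(\mu_t v_t)$), and moves the closure-of-gradients hypothesis into the middle of the argument, via the decomposition $\int u^\eps_t\cdot\rd G^\eps_t=\int\nabla P_\eps\phi\cdot\rd G_t+\int(u^\eps_t-\nabla\phi)\cdot\rd G^\eps_t$ with the remainder controlled by $\|u^\eps_t-\nabla\phi\|_{L^2(\mu^\eps_t)}\to\|u_t-\nabla\phi\|_{L^2(\mu_t)}\le\eta$, which you correctly upgrade from the convergence of Fisher informations to strong $H^1$ convergence of $\sqrt{\rho^\eps_t}$. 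Your route avoids both the extra Benamou--Brenier step and the abstract weak--strong lemma, at the cost of needing a quantitative gradient estimate on $\nabla P_\eps$ to bound the action of $G^\eps$; it is a shorter and more explicit argument, while the paper's is more robustly ``metric'' and handles the pairing of weakly and strongly converging sequences under varying base measures as a self-contained tool.

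There is one genuine slip in the ingredient you use to bound $\int|G^\eps_t|^2/\mu^\eps_t$. You invoke the pointwise $L^1$ gradient estimate $|\nabla P_\eps\varphi|\le C(\eps)\,P_\eps|\nabla\varphi|$, claiming it also holds ``resting on the same stochastic analysis'' in the non-convex case. It does \emph{not}: the paper stresses precisely this point immediately after \eqref{eq:GE_p=1} (``a pointwise inequality as \eqref{eq:GE_p=1} is not available in the non-convex case''), and it is exactly why Theorem~\ref{theo:Fisher_decay} needs a workaround not required in the convex Theorem~\ref{theo:Fisher_decay_convex}; the same issue is reiterated in the discussion section, where the paper records that Sturm gives control of $|\nabla P_tf|^p$ only for $p>1$. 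Fortunately the $p=2$ version \eqref{eq:GE}, $|\nabla P_\eps\varphi|^2\le e^{4S\sqrt{\eps/\pi}+\mathcal O(\eps)}P_\eps(|\nabla\varphi|^2)$, combined with the duality $\int P_\eps(|\nabla\varphi|^2)\,\rd\mu_t=\int|\nabla\varphi|^2\,\rd\mu^\eps_t$, yields exactly the bound $\|\nabla P_\eps\varphi\|_{L^2(\mu_t)}\le e^{2S\sqrt{\eps/\pi}+\mathcal O(\eps)}\|\nabla\varphi\|_{L^2(\mu^\eps_t)}$ that your Riesz argument needs, so the gap is repairable by simply quoting the correct estimate. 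I would also make explicit the justification for skipping the time-mollification (that $\partial_t\rho^\eps_t(x)=\int\nabla_y p_\eps(x,y)\cdot\rd G_t(y)$ is uniformly bounded in $x$ and $L^2$ in $t$, so $t\mapsto\H(\mu^\eps_t)$ is indeed absolutely continuous with the expected derivative): it is correct, but it is the step the paper guards most carefully.
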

As briefly discussed in the introduction this is more convenient for PDE purposes.
Typically from the JKO scheme one gets in the limit $\tau\to 0$ a curve $\mu=\lim\mu^\tau$ solving in particular $\partial_t\mu+\dive(G)$ for some $G=\mu v$ and satisfying \eqref{eq:finite_energy_G}\eqref{eq:finite_Fisher}.
Then \eqref{eq:chain_rule_G} together with EDI forces equality in the $L^2(\mu_t)$ Cauchy-Schwarz equality and thereby identifies $v_t=-\nabla\log\rho_t$, see e.g. \cite[Appendix A]{cances2024discretizing} for a detailed exposition in the case of Fokker-Planck equations.
This allows one to completely bypass more advanced optimal transport arguments (roughly speaking, writing down the Euler-Lagrange equations at each JKO step and then proving that this optimality condition gives the desired PDE in the limit \cite{OTAM}), which often requires convexity of the domain for convenience.

Also, we believe that the assumption that $\nabla\log\rho$ lies in the closure of gradients is redundant, and we conjecture that this is automatic as soon as $\I(\mu)<+\infty$.
 However we did not manage to prove this in full generality so for the sake of rigor we added this extra assumption, which might be difficult to check in practice.
 In order to support this claim we provide in the appendix a simple proof at least in the easy case when $\rho$ is bounded from below and from above, see Lemma~\ref{lem:nablarho_is_gradient}.

\section{Proofs and discussion}
\label{sec:proofs}
Our proofs below will strongly rely on K.T. Sturm's gradient estimate and Wasserstein contraction from \cite[Thm. 1.1]{sturm2025gradient}, which we recall here for convenience
\begin{align}
    &|\nabla P_t f|^2(x) \leq e^{4S \sqrt{t/\pi} + \mathcal O(t)} P_t(|\nabla f|^2)(x) \qquad \text{for all }f\in C^1(M), \label{eq:GE}\tag{GE}\\
    &W^2(P_t^*\mu,P_t^*\nu) \leq e^{4S \sqrt{t/\pi} + \mathcal O(t)}W^2(\mu,\nu) \qquad \text{for all }\mu,\nu\in \P(M). \label{eq:WC}\tag{WC}
\end{align}
By Kuwada duality \cite{kuwadaduality} both are actually equivalent to each other.
Here the meaning of $\mathcal O(t)$ is exactly as in our Theorem~\ref{theo:Fisher_decay}, namely uniform in $f,\mu,\nu$ for $t\in[0,t_0]$ sufficiently small.

The lower order term $\mathcal O(t)$ can be accessed explicitly in \cite{sturm2025gradient} and is essentially $+Kt$ for the Ricci-lower bound $\Ric(x)\geq -K$.
More precisely, when $M$ is convex ($S=0$) one can improve \eqref{eq:GE} to
\begin{equation}
\label{eq:GE_p=1}
|\nabla P_t f|(x) \leq e^{Kt} P_t(|\nabla f|)(x) \qquad \text{for all }f\in C^1(M)\text{ and } \ t\geq0
 \tag{GE\textsubscript{1}}
\end{equation}
see e.g. \cite[Cor.~3.2.6]{wang2014analysis}.

\begin{proof}[Proof of Theorem~\ref{theo:Fisher_decay_convex}]
This could be proved following a very indirect route in a purely metric setting as follows:
Convexity $S=0$ and Ricci lower bounds imply altogether that $\H$ is $\lambda$-displacement convex with $\lambda=-K$.
This in turn guarantees that $|\partial\H|^2=\I$ and that the heat flow coincides with the $\H$-gradient flow \cite{ambrosio2014calculus}.
It is moreover known that for $\lambda$-displacement convex functionals the slope decays exponentially, $|\partial\H|^2(\mu_t)\leq e^{-2\lambda t}|\partial\H|^2(\mu)$, which is exactly the claim.
For more details we refer to \cite[Thm.~2.4.15]{AGS}, \cite[Prop.~11.9]{ambrosio2021lectures}, see also \cite[pg.~97]{Top09}, \cite[Cor.~6]{Lot09} for similar inequalities in a Ricci flow background.

Instead, we provide here an elementary proof based on \eqref{eq:GE_p=1} and dispensing from displacement convexity arguments.
Take any $\mu=\rho V\in \P(M)$ with finite Fisher information (otherwise the statement is vacuous), and denote as always $\mu_t=P^*_t\mu=(P_t\rho)V$.
Then for $t>0$ we have that $\mu_t\in \P^\infty(M)$, i-e $\mu_t=\rho_t V$ with $\rho_t$ smooth and bounded from above and from below.
Moreover, due to \eqref{eq:GE_p=1} and Jensen's inequality for $A(a,b)=\frac{|b|^2}{a}$ we have
\begin{multline*}
\I(\mu_t)=\int_M\frac{|\nabla P_t\rho|^2}{P_t\rho}\rd \vol
\leq e^{2Kt}\int_M\frac{(P_t|\nabla \rho|)^2}{P_t\rho}\rd \vol
\\
\leq e^{2Kt}\int_MP_t\left(\frac{|\nabla \rho|^2}{\rho}\right)\rd \vol
=e^{2Kt}\int_M\frac{|\nabla \rho|^2}{\rho}\rd \vol
=e^{2Kt}\I(\mu)
\end{multline*}
and the proof is complete.
\end{proof}
Note that a pointwise inequality as \eqref{eq:GE_p=1} is not available in the non-convex case.
This requires a workaround and makes the next proof more involved for $S>0$.
\begin{proof}[Proof of Theorem~\ref{theo:Fisher_decay}]

Assume first that $\mu=\rho V\in \P^\infty(M)$, thus making all computations below legitimate.
We give two independent proofs in that case, which we believe are of independent interest.

\paragraph{\textbf{Proof I (via the Wasserstein contraction)}}
Recall that we write $\mu_t=P^*_t\mu =(P_t\rho) V=\rho_t V$ for the Neumann heat flow.
By Wasserstein contraction \eqref{eq:WC} there holds, for $t>s\geq 0,h> 0$ small enough, 
\begin{equation}
\label{eq:W_contract}
    W^2(\mu_{t+h},\mu_t) \leq e^{4S \sqrt{(t-s)/\pi} + \mathcal O(t-s)}W^2(\mu_{s+h},\mu_s),
    \hspace{1cm}\forall\,0\leq s\leq t
\end{equation}
Noting that $\mu_t=\rho_t V\in \P^\infty(M)$ uniquely solves the continuity equation $\partial_t\mu_t+\dive(\mu_t v_t)=0$ with $v_t=-\nabla\log \rho_t$, Lemma \ref{lem:BBnon-convex} gives that
$\lim\limits_{h\to 0}\frac{W^2(\mu_{t+h},\mu_t)}{h^2}=\int_M|v_t|^2\rd \mu_t$ for a.e. $t\geq 0$.
Dividing \eqref{eq:W_contract} by $h^2$ and taking $h\to 0$ thus gives for a.e. $t>s>0$ sufficiently small
$$
    \int_M |\nabla \log \rho_t |^2 \rd \mu_t \leq e^{4S \sqrt{(t-s)/\pi} + \mathcal O(t-s)} \int_M|\nabla \log \rho_s|^2 \,\rd\mu_s\,.
$$
Since we assumed that $\rho\in C^{\infty}(M)$ is bounded from above and from below. so is $\rho_t$ for all $t>0$ (with bounds uniform in $t$).
In this simple setting an immediate application of Lebesgue's dominated convergence guarantees that $t\mapsto\int_M |\nabla \log \rho_t |^2 \rd \mu_t$ is continuous up to $t=0$.
Hence the previous inequality holds in fact for \emph{all} $0\leq s\leq t$ and the statement follows by taking $s=0$.
\paragraph{\textbf{Proof II (via the gradient estimate)}}
The following alternative proof is inspired by \cite[Ch.~5]{FH2025parabolic}.
With the same notations as before, and leveraging the classical identity $(\nabla\log\rho_t)\rho_t =\nabla\rho_t$, there holds
    \begin{align*}
       \I(\mu_t) &= -\int_M|\nabla\log \rho_t|^2 \,\rd\mu_t + 2 \int_M|\nabla\log \rho_t|^2 \rho_t\,\rd V \\
       &= -\int_M|\nabla\log \rho_t|^2 \,\rd(P^*_t\mu) + 2 \int_M  \nabla\log \rho_t\cdot\nabla\rho_t \,\rd \vol \\
       &= -\int_MP_t\left( |\nabla\log \rho_t|^2\right)\, \rd\mu - 2 \int_M\Delta (\log \rho_t) \rho_t \,\rd \vol \\
       &= -\int_MP_t\left( |\nabla\log \rho_t|^2\right)\, \rd\mu - 2 \int_M\Delta (\log \rho_t)  \,\rd (P^*_t\mu) \\
       &= -\int_MP_t\left( |\nabla\log \rho_t|^2\right)\, \rd\mu - 2 \int_MP_t\left[\Delta (\log \rho_t) \right] \,\rd \mu\\
       &= -\int_MP_t\left( |\nabla\log \rho_t|^2\right)\, \rd\mu - 2 \int_M\Delta (P_t\log \rho_t) \,\rd \mu.
    \end{align*}
Here we used duality of $P_t,P^*_t$, integration by parts, and commutation $P_t\Delta=\Delta P_t$.
By the gradient estimate \eqref{eq:GE} with $f=\log\rho_t$ we can estimate $P_t\left( |\nabla\log \rho_t|^2\right)\geq e^{-4S \sqrt{t/\pi} + \mathcal O(t)}|\nabla P_t\log\rho_t|^2$ in the right-hand side, and integrating by parts the second term we get
    \begin{align*}
      \I(\mu_t) 
       &\leq  -e^{-4S \sqrt{t/\pi} + \mathcal O(t)} \int_M|\nabla P_t\log \rho_t|^2 \rd\mu +2 \int_M \nabla P_t \log \rho_t\cdot \nabla \log \rho \,\rd\mu \\
       &\leq -e^{-4S \sqrt{t/\pi} + \mathcal O(t)} \int_M|\nabla P_t\log \rho_t|^2 \rd\mu
       \\
       & \hspace{1cm}+\left[e^{-4S \sqrt{t/\pi} + \mathcal O(t)} \int_M |\nabla \log \rho|^2\rd\mu +e^{4S \sqrt{t/\pi} + \mathcal O(t)} \int_M |\nabla P_t \log \rho_t|^2\rd\mu \right]
    \end{align*}
as claimed.
Here we used $\eps$-Young's inequality $ab\leq \frac 12(\eps a^2+\frac 1\eps b^2)$ with $\eps=e^{-4S \sqrt{t/\pi} + \mathcal O(t)}$.
\\

It only remains to settle the case of an arbitrary initial datum $\mu$.
Not surprisingly, we proceed by approximation.
Of course if $\I(\mu)=+\infty$ the statement is vacuous, so we may as well assume $\mu=\rho  V$, and by \eqref{eq:def_Fisher_H1_u} we can in fact assume that $u=\sqrt{\rho}\in H^1(M)$.
Let $u^n$ be any sequence converging strongly to $u$ in $H^1(M)$, with $u^n$ smooth and bounded away from zero (take for example $u^n=P_{1/n}u$), and normalize $\rho^n=\frac{|u^n|^2}{\|u^n\|^2_{L^2}}$ so that $\mu^n=\rho^n V\in \P^\infty(M)$.
Owing to the strong $H^1$ convergence $u^n\to u$ we have that $\rho^n=|u^n|^2\to |u|^2= \rho$ strongly in $L^1$, with $\|u^n\|^2_{L^2}\to \|u\|^2_{L^2}=\|\rho\|_{L^1}=1$.
Whence
$$
\I(\mu^n)=4\int_M|\nabla \sqrt{\rho^n}|^2\rd V=\frac 4{\|u^n\|^2_{L^2}}\int_M|\nabla u^n|^2\rd V
\xrightarrow[n\to\infty]{}4\int |\nabla u|^2\rd V=\I(\mu).
$$
Let now $\mu^n_t=P^*_t\mu^n=(P_t\rho^n) V$, see Figure~\ref{fig:mu_nt}.
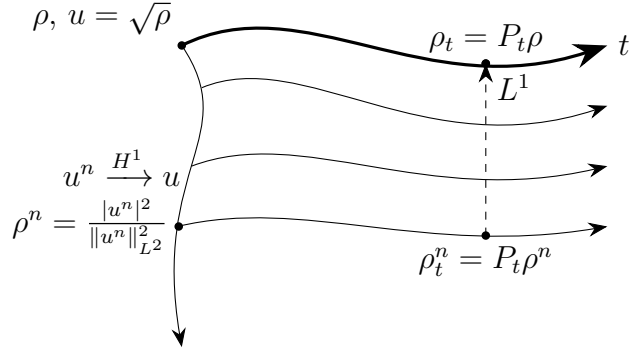
\begin{figure}[h!]
    \centering
\begin{tikzpicture}[scale=0.8]

  \coordinate (Rho0) at (0,5);
  \coordinate (RhoT) at (7,5);
  \coordinate (Rho0N) at (0,0);
  \coordinate (RhoTN) at (7,0);
  
  \coordinate (Rho0n) at (0.33,4.3);
  \coordinate (Rhotn) at (7,4);

  \coordinate (Rho0n2) at (0.16,3);
  \coordinate (Rhotn2) at (7,3);

  \coordinate (Rho0n3) at (-0.07,2);
  \coordinate (Rhotn3) at (7,2);
  
  \coordinate (Rhotn33) at (5,1.85);
  \coordinate (Rhot) at (5,4.7);
  
  \fill (Rho0) circle (2pt) node[above left] {$\rho$, $u=\sqrt{\rho}$};
  \fill (Rho0n2) node[left] {$u^n\xrightarrow[]{H^1} u$};
  \fill (-.05,2) circle (2pt) node[left] {$\rho^n=\frac{|u^n|^2}{\|u^n\|^2_{L^2}}$};
  \fill (Rhotn33) circle (2pt) node[below] {$\rho_t^n = P_t \rho^n$};
  \fill (Rhot) circle (2pt) node[above] {$\rho_t = P_t \rho$};
  \fill (Rhot) circle (2pt) node[below right] {$L^1$};
  \node[right] at (RhoT) {$t$};
  
  \draw[->, >={Stealth[scale=1.5]}, very thick] (Rho0) .. controls (2,6) and (4,4) .. (RhoT);
  \draw[->, >={Stealth[scale=1.5]}] (Rho0) .. controls (1,3.5) and (-0.5,3) .. (Rho0N);

  \draw[->, >={Stealth[scale=1.5]}] (Rho0n) .. controls (2,5) and (4,3) .. (Rhotn);
  \draw[->, >={Stealth[scale=1.5]}] (Rho0n2) .. controls (2,3.7) and (4,2.3) .. (Rhotn2);
  \draw[->, >={Stealth[scale=1.5]}] (Rho0n3) .. controls (2,2.5) and (4,1.5) .. (Rhotn3);
  \draw[->, >={Stealth[scale=1.5]}, dashed] (Rhotn33) to (Rhot);
  

\end{tikzpicture}

    \caption{flow approximation $\mu^n_t=(P_t\rho^n)\vol$}
    \label{fig:mu_nt}
\end{figure}

\noindent
By the first step there holds
$$
\I(\mu^n_t)\leq e^{4S\sqrt{t/\pi}+\mathcal O(t)}\I(\mu^n)
$$
for small $t\geq 0$.
By the Wasserstein contraction \eqref{eq:WC} and because $\rho^n\to \rho$ in $L^1$ $\implies$ $\mu^n\narrowcv\mu$ narrowly $\implies$ $W(\mu^n,\mu)\to 0$, we get
$$
W^2(\mu^n_t,\mu_t)\leq e^{4S\sqrt{t/\pi}+\mathcal O(t)}W^2(\mu^n,\mu) \xrightarrow[n\to\infty]{}0
$$
and in particular $\mu^n_t\narrowcv \mu_t$ for any fixed $t$.
By lower semi-continuity in the previous inequality and $\I(\mu^n)\to \I(\mu)$ we get
$$
\I(\mu_t)
\leq
\liminf\limits_{n\to\infty}\I(\mu^n_t)
\leq
\liminf\limits_{n\to\infty}e^{4S\sqrt{t/\pi}+\mathcal O(t)}\I(\mu^n)
=
e^{4S\sqrt{t/\pi}+\mathcal O(t)}\I(\mu)
$$
and the proof is complete.
\end{proof}

\begin{proof}[Proof of Theorem~\ref{theo:fisher_upper_grad}]
Since the statement is invariant under time-reparametrization and any AC curve can be reparametrized into a constant-speed Lipschitz curve (using arclength, see e.g. \cite[Lem. 1.1.4]{AGS}) we might as well assume that $\mu \in AC_2([0,T];W)$.

The classical idea is to construct an $\ed$-regularization $\mu^\ed_t$, where $\eps$ and $\delta$ control the space and time regularizations, respectively, justify an exact chain-rule for $\ed>0$, and then pass to the limit first as $\delta\to 0$ and then $\eps\to 0$ with good estimates.
The main caveat is that, in Riemannian subdomains and in contrast with whole Euclidean spaces, the space regularization cannot simply be achieved using standard convolution, under which the kinetic action and Fisher information would both be well-behaved by classical convexity arguments (in particular this requires spatial derivatives and convolution to commute).
We use instead the Neumann heat flow as a regularization procedure, but this poses significant technical challenges because $\nabla$ and $P_t$ do not commute in general.
Roughly speaking, the kinetic energy will be controlled via Sturm's gradient estimate and Wasserstein contraction  \eqref{eq:GE}\eqref{eq:WC}, while the Fisher information will be controlled as in Theorem~\ref{theo:Fisher_decay}.
\paragraph{\underline{Step 1:} regularization}
For small $\eps>0$ we define
\begin{equation*}
\mu^\eps_t\coloneqq P^*_\eps\mu_t,
\end{equation*}
i-e the solution at time $\eps$ of the Neumann-heat flow started from $\mu_t$ as initial datum.
By \eqref{eq:WC} one has that $\mu^\eps\in AC([0,T];W)$ with explicit control of the metric speed
\begin{equation}
\label{eq:speed_mu_eps}
|\dot\mu^\eps_t|
\leq e^{2S\sqrt{\eps/\pi}+\mathcal O(\eps)}|\dot\mu_t|
\leq e^{C\sqrt\eps}|\dot\mu_t|
\end{equation}
for some $C>0$ only depending on the geometry of the domain (through our Ricci and convexity lower bounds $K,S<+\infty$).
Moreover, by properties of the heat flow \cite{grigoryan2009heat,wang2014analysis,wang_gradient} the density $\rho^\eps_t(x)$ of $\mu^\eps_t=\rho^\eps_t \vol$ is smooth up to the boundary with
\begin{equation}
\label{eq:heat-kernel-bounds}
|\nabla\rho^\eps_t(x)|\leq C_\eps
\qqtext{and}
c_\eps \leq \rho^\eps_t(x)\leq C_\eps
\end{equation}
for some finite $c_\eps,C_\eps>0$ depending only on the domain and $\eps>0$, uniformly in $t$ .

By the Benamou-Brenier formula (Lemma \ref{lem:BBnon-convex}) there is a velocity field $v^\eps=v^\eps_t(x)$ optimally representing the curve $\mu^\eps$ in the sense that
$$
\partial_t\mu^\eps+\dive(\mu^\eps v^\eps)=0
\qqtext{with}
\int_M |v^\eps_t|^2\rd\mu^\eps_t = |\dot\mu^\eps_t|^2
\quad\text{for a.e. }t\in (0,T).
$$
We denote the corresponding momentum as
$$
F^\eps= \mu^\eps v^\eps.
$$
Note that, since $\mu^\eps$ has smooth density, $F^\eps$ is also absolutely continuous in space.
As a consequence we do not distinguish below the measure-momentum and its density (w.r.t $V$), and simply write $F^\eps$.

Next we regularize in time.
To this end we first extend by continuity $\mu^\eps_t=\mu^\eps_0$ for $t\leq 0$ and $\mu^\eps_t=\mu^\eps_T$ for $t\geq T$, and also set $F^\eps_t=0$ for $t\not\in[0,T]$.
These extended $(\mu^\eps,F^\eps)$ solve the continuity equation for $t\in \R$.
If $\eta^\delta(t)=\frac 1\delta\eta(t/\delta)$ is a standard one-dimensional mollifier supported in $[-\delta,\delta]$, we take the time-convolution
$$
\mu^\ed_t\coloneqq \eta^\delta\ast \mu^\eps_t
\qqtext{and}
F^\ed_t\coloneqq \eta^\delta\ast F^\eps_t.
$$
Since the time convolution commutes with spatial derivatives, one still has
$$
\partial_t\mu^\ed+\dive F^\ed=0
$$
for $t\in\R$.
As before, $F^\ed$ is absolutely continuous in space and we identify the measure and its density with a slight abuse of notations.
\paragraph{\underline{Step 2:} chain rule}
By usual properties of the convolution and $\nabla \rho^\ed_t=\nabla(\eta^\delta\ast \rho^\eps_t)=\eta^\delta\ast(\nabla \rho^\eps_t)$ it is immediate to see that $\rho^\ed_t$ satisfies the exact same bounds as \eqref{eq:heat-kernel-bounds}, namely
\begin{equation}
\label{eq:heat-kernel-bounds-delta}
\forall\,t\in\R,\,x\in\O:\qquad |\nabla\rho^\ed_t(x)|\leq C_\eps
\qqtext{and}
0<c_\eps \leq \rho^\ed_t(x)\leq C_\eps
\end{equation}
uniformly in $\delta$.
In particular, $\rho^\ed$ is smooth in time and space (up to the boundary), bounded from above and from below, and satisfies the continuity equation with no-flux boundary condition.
This is more than enough to differentiate under the integral sign, viz
\begin{equation}
\label{eq:chain_rule_eps_delta}
\H(\mu^\ed_{t_1})-\H(\mu^\ed_{t_0})
=\int_{t_0}^{t_1}\int_M \nabla \log\rho^\ed_t \cdot F^\ed_t\,\rd\vol\,\rd t.
\end{equation}
\paragraph{\underline{Step 3:} limit $\delta\to 0$.}
As $\delta\to 0 $ we have of course that $\rho^\ed\to\rho^\eps$ and $F^\ed\to F^\eps$ at least for a.e. $(t,x)$.
For fixed $\eps$, the density $\rho^\ed$ is bounded from above and from below uniformly in $t,x$ but most importantly uniformly in $\delta$.
This suffices to apply Lebesgue's dominated convergence for $\H(\mu^\ed_{t})\to \H(\mu^\eps_t)$ for all $t$, so the left-hand side of \eqref{eq:chain_rule_eps_delta} immediately passes to the limit.
For the right-hand side, we have similarly that $\nabla\log\rho^\ed=\frac{\nabla\rho^\ed}{\rho^\ed}\to \frac{\nabla \rho^\eps}{\rho^\eps}$ and $F^\ed\to F^\eps$ at least pointwise almost everywhere.
By Vitali's convergence theorem it is enough to check that $f^\ed\coloneqq \nabla \log\rho^\ed \cdot F^\ed$ is uniformly integrable.
For this we simply leverage \eqref{eq:heat-kernel-bounds-delta} to bound $|\nabla\log\rho^\ed|=\left|\frac{\nabla\rho^\ed}{\rho^\ed}\right|\leq C_\eps$ as well as $|F^\ed|=\sqrt{\rho^\ed}\left|\frac{F^\ed}{\sqrt{\rho^\ed}}\right|\leq C_\eps\left|\frac{F^\ed}{\sqrt{\rho^\ed}}\right|$.
By Jensen's inequality (for the convex function $A(a,b)=\frac{|b|^2}{a}$) we estimate
\begin{multline*}
\int_\R\int_M|f^\ed|^2\,\rd\vol\,\rd t
\leq
C_\eps\int_\R\int_M \frac{|F^\ed|^2}{\mu^\ed}\,\rd t
=
C_\eps\int_\R\int_M \frac{|\eta^\delta\ast F^\eps|^2}{\eta^\delta\ast \mu^\eps}\,\rd t
\\
\leq
C_\eps\int_\R\int_M \eta^\delta\ast\left(\frac{| F^\eps|^2}{\mu^\eps}\right)\,\rd t
=
C_\eps \int_\R\eta^\delta\ast \left(\int_M \frac{|F^\eps|^2}{\rho^\eps}\right)\rd t
\\
= C_\eps\int_{\R}\int_M\frac{|F^\eps|^2}{\rho^\eps}\rd t
=C_\eps \int_0^T |\dot\mu^\eps_t|^2\rd t
\end{multline*}
uniformly in $\delta$.
Owing to \eqref{eq:speed_mu_eps} and our standing assumption that $\mu\in AC_2$ we have that $\int_0^T |\dot\mu^\eps_t|^2\rd t\leq e^{C\eps}\int_0^T |\dot\mu_t|^2\rd t$ is finite.
The family $\{f^{\ed}\}_{\delta>0}$ is therefore uniformly integrable and we can take the limit $\delta\to 0$ in \eqref{eq:chain_rule_eps_delta} as
\begin{equation}
\label{eq:chain_rule_eps}
\H(\mu^\eps_{t_1})-\H(\mu^\eps_{t_0})
=
\int_{t_0}^{t_1}\int_M \nabla \log\rho^\eps_t \cdot F^\eps_t\, \rd\vol\,\rd t
=
\int_{t_0}^{t_1}\int_M \nabla \log\rho^\eps_t \cdot  v^\eps_t\, \rd\mu^\eps_t\,\rd t
\end{equation}
\paragraph{\underline{Step 4:} limit $\eps\to 0$}
Applying the Cauchy-Schwarz inequality in $L^2(\mu_t)$ for a.e. $t$ we obtain at once
$$
\left|\H(\mu^\eps_{t_1})-\H(\mu^\eps_{t_0})\right|
\leq 
\int_{t_0}^{t_1}\|\nabla\log\rho^\eps_t\|_{L^2(\mu^\eps_t)}\left\|v^\eps_t\right\|_{L^2(\mu^\eps_t)}\rd t
=\int_{t_0}^{t_1}\sqrt\I(\mu^\eps_t)|\dot\mu^\eps_t|\rd t,
$$
since $v^\eps$ was precisely chosen to optimally represent $\mu^\eps$ in the sense of Lemma~\ref{lem:BBnon-convex}.
By Theorem~\ref{theo:Fisher_decay} and \eqref{eq:speed_mu_eps} we see that the right-hand side can be controlled as
$$
\left|\H(\mu^\eps_{t_1})-\H(\mu^\eps_{t_0})\right|
\leq 
e^{C\sqrt\eps}\int_{t_0}^{t_1}\sqrt\I(\mu_t)|\dot\mu_t|\rd t,
$$
and we wish to pass to the limit $\eps\to 0$.
The right-hand side is trivially converging.
For the left-hand side we claim that $\H(\mu^\eps_t)\to\H(\mu_t)$ for any $t$.
Indeed, by Jensen's inequality one has $\H(\mu^\eps_t)\leq \H(\mu_t)$ for any $t$.
But by lower semicontinuity of the entropy we also have $\H(\mu_t)\leq \liminf\limits_{\eps\to 0}\H(\mu^\eps_t)$, which entails the claim.
Passing to the limit in the above inequality finally establishes \eqref{eq:chain_rule_upper_grad} and concludes the proof.
\end{proof}

\begin{proof}[Proof of Theorem~\ref{theo:chain_rule_G}]
Owing to our assumption \eqref{eq:finite_energy_G} and the Benamou-Brenier formula (Lemma~\ref{lem:BBnon-convex}), we see that $\mu\in AC_2([0,T];W)$ with moreover
$$
|\dot\mu_t|^2\leq \int_M\frac{|G_t|^2}{\mu_t}
\qquad \text{for a.e. }t\in (0,T).
$$
We then proceed exactly as in the proof of Theorem~\ref{theo:fisher_upper_grad}: the regularized curve $\mu^\eps_t=P^*_\eps \mu_t$ is still $AC_2$ with
$$
|\dot\mu^\eps_t|\leq e^{C\sqrt\eps}|\dot\mu_t|,
$$
and one can pick an optimal representative $F^\eps=\mu^\eps v^\eps$ satisfying
$$
\partial_t\mu^\eps_t +\dive F^\eps_t =0
\qqtext{with}
\int_M \frac{|F^\eps_t|^2}{\mu^\eps_t}=|\dot\mu^\eps_t|^2\text{ for a.e. }t\in (0,T).
$$
Further regularizing in time and arguing exactly as in the previous proof, we end-up with
\begin{equation}
\label{eq:chain_rule_eps_G}
\H(\mu^\eps_{t_1})-\H(\mu^\eps_{t_0})
=
\int_{t_0}^{t_1}\int_M \nabla \log\rho^\eps \cdot F^\eps
=
\int_{t_0}^{t_1}\int_M \nabla \log\rho^\eps_t \cdot v^\eps_t\,\rd \mu^\eps_t\rd t
\end{equation}
as before.
Note that the right hand side lies in $L^1(0,T)$ for any fixed $\eps>0$.
Indeed, writing for convenience $\bar\mu^\eps=\mu^\eps_t\rd t$ for the natural space-time measure, our assumptions guarantee that $\nabla\log\rho^\eps$ and $v^\eps$ both belong to $L^2(\bar \mu^\eps)$, with quantitative control
\begin{equation}
\label{eq:quantitative_control_I_mu_eps}
\|\nabla\log \rho^\eps\|^2_{L^2(\bar \mu^\eps)}
=\int_0^T\int_{\O}|\nabla\log\rho^\eps_t|^2\,\rd\mu^\eps_t\rd t
=\int_0^T \I(\mu^\eps_t)\rd t
\leq e^{C\sqrt{\eps}}\int_0^T \I(\mu_t)\rd t
\end{equation}
and
\begin{equation}
\label{eq:quantitative_control_v_eps}
\|v^\eps\|^2_{L^2(\bar \mu^\eps)}
=\int_0^T\int_{\O}|v^\eps_t|^2\,\rd\mu^\eps_t\rd t
=\int_0^T |\dot\mu^\eps_{t}|^2\rd t
\leq e^{C\sqrt{\eps}}\int_0^T |\dot\mu_t|^2\rd t.
\end{equation}
The goal is now to pass to the limit $\eps\to 0$ in \eqref{eq:chain_rule_eps_G}.
Note that the left-hand side passes to the limit exactly as before.
To handle the right-hand side recall first that $\int_0^T|\dot\mu^\eps_t|^2\rd t\leq C$ uniformly in $\eps$.
The standard Arzel\`a-Ascoli theorem therefore guarantees that $\mu_\eps\to \mu$ uniformly in $C([0,T];W)$.
As a consequence the space-time measure $\bar\mu^\eps\narrowcv \bar\mu\coloneqq \mu_t\rd t$ converges narrowly in $\M([0,T]\times M)$.
By rather standard compactness arguments \cite[Thm.~5.4.4]{AGS}, \eqref{eq:quantitative_control_I_mu_eps}\eqref{eq:quantitative_control_v_eps} show that, up to discrete subsequences if needed, $u^\eps\coloneqq \nabla\log\rho^\eps$ and $v^\eps$ both converge weakly to some $u,v\in L^2(\bar\mu)$ in the sense that
$$
\int_0^T\int_M u^\eps \cdot\xi\,\rd\bar\mu^\eps
\to \int_0^T\int_M u \cdot\xi\,\rd\bar\mu
\qqtext{and}
\int_0^T\int_M v^\eps \cdot\xi\,\rd\bar\mu^\eps
\to \int_0^T\int_M v \cdot\xi\,\rd\bar\mu
$$
for all $\xi\in C^\infty_c([0,T]\times M)$, with moreover
$$
\|u\|_{L^2(\bar\mu)} \leq \liminf\limits_{\eps\to 0}\|u^\eps\|_{L^2(\bar\mu^\eps)}
\qqtext{and}
\|v\|_{L^2(\bar\mu)} \leq \liminf\limits_{\eps\to 0}\|v^\eps\|_{L^2(\bar\mu^\eps)}.
$$
This means in particular that $\mu^\eps u^\eps \to \mu u$ in the sense of distributions, and since $\mu^\eps u^\eps=\nabla\mu^\eps$ it is easy to check that necessarily $u$ is the measure-theoretic logarithmic derivative $u=\nabla\log\rho$ in the sense of \eqref{eq:log_derivative}.
Moreover, owing to \eqref{eq:quantitative_control_I_mu_eps} we also have the upper bound
$
\limsup \limits_{\eps\to 0}\|u^\eps\|^2_{L^2(\bar\mu^\eps)}\leq \int_0^T\I(\mu_t)\rd t$.
This implies
$$
\lim\limits_{\eps\to 0}\|u^\eps\|^2_{L^2(\bar\mu^\eps)}=\|u\|^2_{L^2(\bar\mu)}
$$
and means that $u^\eps\to u$ strongly in the sense of \cite[Def.~5.4.3]{AGS}.

With now $u^\eps\to u$ strongly and $v^\eps\rightharpoonup v$ weakly we can apply a weak-strong convergence procedure, Lemma~\ref{lem:weak_strong_CV} in the Appendix, to guarantee that
$$
\int_{t_0}^{t_1}\int_M \nabla \log\rho^\eps_t  \cdot v^\eps_t\,\rd \mu^\eps_t\rd t
\xrightarrow[\eps\to 0]{}
\int_{t_0}^{t_1}\int_M \nabla \log\rho_t  \cdot v_t\,\rd \mu_t\rd t
=\int_{t_0}^{t_1}\int_M \nabla \log\rho \cdot \rd F,
$$
where we put $F=v_t\rd\mu_t\rd t$.
By \eqref{eq:chain_rule_eps_G} we conclude that
$$
\H(\mu_{t_1})-\H(\mu_{t_0})
=
\int_{t_0}^{t_1}\int_M \nabla \log\rho \cdot \rd F,
\hspace{1cm}\forall\ 0<t_0\leq t_1<T.
$$
It remains to relate $F$ to the initial momentum field $G$.
Exactly as for $u^\eps$, weak convergence $v^\eps\rightharpoonup v$ implies that $F^\eps=\mu^\eps v^\eps \to \mu v=  F$ at least in the sense of distributions.
Since $\partial_t\mu^\eps+\dive( \mu^\eps v^\eps)=0$ we see that the pair $(\mu,F)$ solves the continuity equation $\partial_t\mu+\dive F=0$ (with no-flux boundary conditions), just as $(\mu,G)$.
This means that $\dive F=-\partial_t\mu=\dive G$ in the weak sense.
At least formally this gives by Hodge decomposition $\int \nabla\log\rho\cdot\rd(F-G)=0$, because $\dive(F-G)=0$ and $u=\nabla\log\rho$, as "a gradient", lies in the orthogonal of divergence-free vector-fields.
In order to make this more rigorous, recall that $F,G$ both have finite kinetic action and therefore $F=\mu v,G=\mu w$ for $v,w\in L^2(\mu)$, with
$$
\int_M \nabla\phi\cdot\rd F_t=
\int_M \nabla\phi\cdot v_t\,\rd\mu_t 
=
\int_M \nabla\phi\cdot w_t\,\rd\mu_t
=\int_M \nabla\phi\cdot\rd G_t,
\hspace{1cm}\forall\,\phi\in C^1(M)
$$
for a.e. $t$.
By density this holds with $\nabla\phi$ replaced by any vector-field $\xi$ in the {$L^2(\mu_t)$-closure} of $\{\nabla\phi,\,\phi\in C^1(M)\}$.
By assumption $u_t=\nabla\log\rho_t$ is such a velocity field, and as a consequence
$$
\H(\mu_{t_1})-\H(\mu_{t_0})
=
\int_{t_0}^{t_1}\int_M \nabla \log\rho \cdot \rd F
=
\int_{t_0}^{t_1}\int_M \nabla \log\rho \cdot \rd G
$$
as in our statement.
Finally, let us check that $t\mapsto \H(\mu_{t_1})$ is absolutely continuous, which amounts to checking that $t\mapsto \int_M \nabla \log\rho _t\cdot \rd G_t$ is $L^1$.
Given that $G=\mu w$ with $w_t\in L^2(\mu_t)$ we simply observe by Cauchy-Schwarz inequality that
\begin{multline*}
    \int_0^T\left|\int_M\nabla\log\rho_t \cdot \rd G_t\right|\rd t
    =\int_0^T\left|\int_M\nabla\log\rho_t \cdot w_t\rd \mu_t\right|\rd t
    \\
    \leq
    \left(\int_0^T\int_M|\nabla\log\rho_t|^2\rd\mu_t\rd t\right)^\frac12
     \left(\int_0^T\int_M|w_t|^2\rd\mu_t\rd t\right)^\frac12
     \\
    =\left(\int_0^T\I(\mu_t)\rd t\right)^\frac12
    \left(\int_0^T\int_M\frac{|G_t|^2}{\mu_t}\rd t\right)^\frac12<+\infty
\end{multline*}
and the proof is complete.
\end{proof}
\bigskip

 Let us conclude this section with a few comments and possible extensions:
 \begin{itemize}
\item 
 The Fisher information $\I(\mu)=\int\frac{|\nabla\rho|^2}{\rho}\rd\vol$ is jointly convex in $(\rho,\nabla\rho)$ but only barely, being 1-homogeneous.
 This makes the task of controlling $\I(\mu_t)$ along the heat flow more subtle than meets the eye.
 As should be clear from the proof of Theorem~\ref{theo:Fisher_decay_convex}, in convex domains one only gets away from this degenerate convexity with the help of the strong pointwise gradient estimate \eqref{eq:GE_p=1}.
 In non-convex domains \eqref{eq:GE_p=1} crucially fails and the proof of Theorem~\ref{theo:Fisher_decay} becomes more involved.
 However, for modified Fisher-information-like functionals enjoying an even so slightly improved joint convexity in $(\rho,|\nabla\rho|^{p})$ for some $p>1$, one can leverage much more easily $p$-gradient estimates, in order to control the growth $\I(\mu_t)$ along the heat flow.
 To make this more precise let us consider the example of the Porous Medium equation $\partial_t\rho=\Delta\rho^m$, for which the entropy is $\E_m(\mu)=\int \rho^m\,\rd\vol$ and the Fisher-information is $\I_m(\mu)=\int|\nabla \rho^{m-1}|^2\rho\,\rd\vol=\int \rho^{2m-3}|\nabla\rho|^2\,\rd\vol $ up to irrelevant multiplicative constants depending only on $m$.
 For some $p >1$ to be chosen soon, we write this in the form
 $$
 \I_m(\mu)=\int\rho^{2m-3}(|\nabla\rho|^{p})^{2/p}\,\rd\vol.
 $$
 Roughly speaking, in non-convex domains Sturm's gradient estimates give a good pointwise control of $|\nabla P_t f|^p$ for any $p>1$, but not for $p=1$.
 The key point is therefore to consider $\I_m$ as a jointly convex function of $(\rho,|\nabla\rho|^p)$, and not $(\rho,|\nabla\rho|)$, in order to gain just a little convexity.
 More precisely, if (and only if) $m\in (1,3/2)$ then one can choose $p>1$ close enough to $1$ so that $(a,b)\mapsto a^{2m-3}b^{2/p}$ is convex.
 By \cite[Thm.~1.1(iii)]{sturm2025gradient} the heat flow satisfies the pointwise gradient estimate
$$ 
|\nabla P_t\rho|^p(x)
\leq
e^{C\sqrt{t}} P_t(|\nabla\rho|^p)(x),
$$
whence by Jensen's inequality
\begin{multline*}
\I_m(\mu_ t )
  \leq 
\int (P_ t \rho)^{2m-3}\left[e^{C\sqrt t } P_ t (|\nabla\rho|^ p)\right]^{2/ p}\,\rd\vol
\\
 \leq  e^{C\sqrt t }
\int P_ t \left[\rho^{2m-3}(|\nabla\rho|^{ p})^{2/ p}\right]\,\rd\vol
\\
 \leq   e^{C\sqrt t }
\int\rho^{2m-3}(|\nabla\rho|^{ p})^{2/ p}\,\rd\vol
 = e^{C\sqrt t }\I_m(\mu),
\end{multline*}
where the constant $C>0$ in the exponential may vary from line to line but is uniform in $t,\mu$.

Observer that the same limitation $m\leq 3/2$ appears in \cite{erbar2024gradient} for the exact same reason that $\I_m$ must be jointly convex in $(\rho,\nabla\rho)$.
It would be interesting to extend the result to $m>3/2$ or more general, non algebraic Fisher-like functionals, when the joint convexity fails.
 \item
  In both our chain rules, Theorem~\ref{theo:fisher_upper_grad} and Theorem~\ref{theo:chain_rule_G}, the statements themselves are independent of how badly the domain fails to be convex.
 The precise curvature lower bound $S>0$ plays no role whatsoever and $S<+\infty$ is needed for regularity purposes only, essentially allowing the application of Sturm's results \cite{sturm2025gradient} and ensuring the validity of \eqref{eq:WC}\eqref{eq:GE}.
 One might be able to cover the case of reasonable Lipschitz domains $\Omega\subset\R^d$ (say, polygonal with controlled aperture, as is common in numerical analysis) by applying our results in either increasing sequences of inner domains $\Omega_k\nearrow \Omega$ or decreasing outer $\Omega_k\searrow\Omega$, with $\Omega_k$ satisfying the necessary assumptions for some $S_k\to+\infty$ as $k\to\infty$.
 We leave this for some future work.
 \item
 Our approach may also allow to treat more general relative entropies $\mu\mapsto\H(\mu\vert \nu_U)$ for some Gibbs measure $\nu_U=\frac{e^{-U(x)}}{Z}\vol$, but for the sake of exposition we did not push in this direction.
\end{itemize}

\begin{appendices}
\section{Appendix}
\label{sec:appendix}

\begin{lem}[Benamou-Brenier formula \cite{BB,lisini2007characterization}]\label{lem:BBnon-convex}
    If $(\mu,F)=(\mu,\mu v)$ solves the continuity equation \eqref{eq:CE} with $\int_0^T \int_M |v_t|^2\, \rd\mu_t \rd t<\infty$, then (up to redefining $\mu$ on a negligible set of times) $\mu\in AC_2([0,T];W)$ and
    \begin{equation*}
        |\dot\mu_t|^2 \leq  \int_M |v_t|^2 \,\rd\mu_t
        \hspace{1cm}\text{for a.e. }t\in[0,T].
    \end{equation*}
    Conversely, if $\mu \in AC_2([0,T];W)$, then there exists a (minimal) vector field $v:[0,T]\to TM$ such that $(\mu,\mu v)$ solves the continuity equation \eqref{eq:CE} and
    \begin{equation*}
        |\dot\mu_t|^2 = \int_M |v_t|^2 \rd\mu_t
        \hspace{1cm}\text{for a.e. }t\in[0,T].
    \end{equation*}
    Finally, $v$ is minimal if and only if $v_t \in \overline{\{\nabla\phi,\,\phi\in C^1(M)\}}^{L^2(\mu_t)}$ for a.e. $t\in[0,T]$ (and it is unique).
\end{lem}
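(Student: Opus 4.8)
The plan is to adapt, to the present compact Riemannian setting with no-flux boundary conditions, the by-now classical arguments for curves in $\P_2(\R^d)$ of \cite[Ch.~8]{AGS} and their metric-space counterparts in \cite{lisini2007characterization}. Note that $(M,d)$ is a compact --- hence complete, separable, and geodesic --- metric space, so the relevant abstract tools apply, with the only genuine care needed at $\partial M$.

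\textbf{Direct implication.} Given $(\mu,\mu v)$ solving \eqref{eq:CE} with $\int_0^T\int_M|v_t|^2\,\rd\mu_t\,\rd t<\infty$ (hence also $\int_0^T\int_M|v_t|\,\rd\mu_t\,\rd t<\infty$, $M$ being bounded), I would invoke the superposition principle \cite{lisini2007characterization} (see also \cite[\S 8.2]{AGS}): there exists $\boldsymbol\eta\in\P(C([0,T];M))$ concentrated on absolutely continuous curves $\g$ with $\dot\g_t=v_t(\g_t)$ for a.e.\ $t$, such that $(e_t)\pf\boldsymbol\eta=\mu_t$ for the evaluation maps $e_t(\g)\coloneqq\g_t$, and $\int\int_0^T|\dot\g_t|^2\,\rd t\,\rd\boldsymbol\eta=\int_0^T\int_M|v_t|^2\,\rd\mu_t\,\rd t$. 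Since $(e_s,e_t)\pf\boldsymbol\eta$ is an admissible transport plan between $\mu_s$ and $\mu_t$, Jensen's inequality and Cauchy--Schwarz in time yield, for $0\le s\le t\le T$,
\begin{align*}
W^2(\mu_s,\mu_t)
&\le\int d(\g_s,\g_t)^2\,\rd\boldsymbol\eta(\g)
\le\int\left(\int_s^t|\dot\g_\tau|\,\rd\tau\right)^2\rd\boldsymbol\eta(\g)\\
&\le(t-s)\int\int_s^t|\dot\g_\tau|^2\,\rd\tau\,\rd\boldsymbol\eta(\g)
=(t-s)\int_s^t\int_M|v_\tau|^2\,\rd\mu_\tau\,\rd\tau.
\end{align*}
Thus \eqref{eq:def_AC} holds with $m(\tau)=\left(\int_M|v_\tau|^2\,\rd\mu_\tau\right)^{1/2}\in L^2(0,T)$, so $\mu\in AC_2([0,T];W)$, and since $m$ is an admissible bound, minimality of the metric speed gives $|\dot\mu_t|^2\le\int_M|v_t|^2\,\rd\mu_t$ for a.e.\ $t$.

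\textbf{Converse.} Given $\mu\in AC_2([0,T];W)$, I would first prove that for each $\phi\in C^1(M)$ the map $t\mapsto\int_M\phi\,\rd\mu_t$ is absolutely continuous with
$$
\left|\frac{d}{dt}\int_M\phi\,\rd\mu_t\right|\le|\dot\mu_t|\left(\int_M|\nabla\phi|^2\,\rd\mu_t\right)^{1/2}
\qquad\text{for a.e.\ }t\in(0,T).
$$
This is obtained by taking, for small $h$, an optimal plan $\pi_h$ between $\mu_t$ and $\mu_{t+h}$, bounding $\phi(y)-\phi(x)\le\int_0^1|\nabla\phi|(\g^{xy}_r)\,|\dot\g^{xy}_r|\,\rd r$ along a minimizing geodesic $\g^{xy}$ from $x$ to $y$, applying Cauchy--Schwarz in $(r,x,y)$, and letting $h\to0$ (using $W(\mu_t,\mu_{t+h})\to0$ and narrow convergence of the suitably rescaled geodesic plans to a plan on constant curves). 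Consequently, for a.e.\ $t$ the assignment $\nabla\phi\mapsto\frac{d}{dt}\int_M\phi\,\rd\mu_t$ is a well-defined bounded linear functional on $\{\nabla\phi:\phi\in C^1(M)\}\subset L^2(\mu_t)$ of norm $\le|\dot\mu_t|$; Riesz representation then gives a unique $v_t$ in the $L^2(\mu_t)$-closure of $\{\nabla\phi:\phi\in C^1(M)\}$ with $\frac{d}{dt}\int_M\phi\,\rd\mu_t=\int_M\nabla\phi\cdot v_t\,\rd\mu_t$ and $\|v_t\|_{L^2(\mu_t)}\le|\dot\mu_t|$, and a standard measurable-selection argument makes $t\mapsto v_t$ Borel. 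By construction $(\mu,\mu v)$ solves \eqref{eq:CE} with no-flux boundary condition, and the direct implication applied to this pair gives $|\dot\mu_t|\le\|v_t\|_{L^2(\mu_t)}$, whence equality for a.e.\ $t$. Finally, if $(\mu,\mu\tilde v)$ is any other solution of \eqref{eq:CE}, subtracting the continuity equations shows $\int_M\nabla\phi\cdot(\tilde v_t-v_t)\,\rd\mu_t=0$ for all $\phi\in C^1(M)$ and a.e.\ $t$, i.e.\ $v_t$ is the orthogonal projection of $\tilde v_t$ onto $\overline{\{\nabla\phi,\,\phi\in C^1(M)\}}^{L^2(\mu_t)}$; hence $\|v_t\|_{L^2(\mu_t)}\le\|\tilde v_t\|_{L^2(\mu_t)}$, with equality (for a.e.\ $t$) if and only if $\tilde v_t=v_t$, which lies in that closure --- and, the projection being unique, this also gives uniqueness of the minimal field.

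\textbf{Main obstacle.} The only real difficulty beyond the Euclidean case is the boundary. One must ensure that minimizing geodesics between interior points --- which may run along $\partial M$ --- remain absolutely continuous with Riemannian energy equal to the squared length (true since $(M,d)$ is compact and geodesic), and, more importantly, that \eqref{eq:CE} is tested against $\phi\in C^1(M)$ with \emph{no} boundary condition, which is exactly why the pertinent velocity space is the closure of \emph{full} gradients $\{\nabla\phi:\phi\in C^1(M)\}$ rather than of compactly supported ones; this is the analytic incarnation of the no-flux condition and must be threaded consistently through both the superposition principle and the Riesz representation. An alternative would be to transplant everything along an isometric embedding $M\hookrightarrow\R^N$, but rewriting \eqref{eq:CE} in the ambient space and projecting the ambient velocity back onto $TM$ is at least as delicate, so I would stay intrinsic.
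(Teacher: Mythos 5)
Your proof is (modulo the technical points you flag yourself) correct, but it takes a genuinely different route from the paper's. The paper's proof is essentially a citation: it observes that $M$ embeds in its double $\hat M$, which is a smooth manifold \emph{without} boundary of the same dimension, extends the metric smoothly to $\hat M$, and then invokes the known Benamou--Brenier theorem on boundaryless manifolds from \cite[Thm.~2.29, Eq.~2.23]{AGusers}, \cite[Rmk.~8]{lisini2007characterization}, \cite[Thm.~8.4.5]{AGS}. Your objection to an embedding approach is therefore slightly misdirected: the paper does not embed $M$ isometrically into some $\R^N$, so no tangential projection of an ambient velocity is needed --- in the double, $TM$ is literally $T\hat M$ restricted to $M$, and the test class $C^1(M)$ coincides with restrictions of $C^1(\hat M)$ functions. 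The real subtlety the paper's route has to absorb (and which your intrinsic route sidesteps by construction) is the compatibility between the distances: since $\partial M$ may be non-convex, geodesics of $\hat M$ between points of $M$ may leave $M$, so $d_{\hat M}\le d_M$ can be strict, and one must argue that for curves supported in $M$ the two induced metric speeds coincide (true, since $d_M/d_{\hat M}\to1$ uniformly for nearby pairs, but left implicit). Your intrinsic re-derivation --- superposition for the direct part, the derivative bound $\bigl|\frac{d}{dt}\int\phi\,\rd\mu_t\bigr|\le|\dot\mu_t|\,\|\nabla\phi\|_{L^2(\mu_t)}$ plus Riesz representation for the converse, and orthogonal projection for minimality --- is the standard AGS scheme transplanted to $(M,d_M)$, and it correctly threads the no-flux condition through by insisting on the test space $C^1(M)$ (not $C^1_c$) and hence the closure of \emph{all} gradients. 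The trade-off: the paper's proof is three lines but pushes the whole burden into the citations and the $W_M$ vs.\ $W_{\hat M}$ compatibility; yours is self-contained and boundary-transparent, at the cost of re-establishing the superposition and concentration-of-geodesic-plans steps, which you sketch but do not fully carry out.
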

\begin{proof}
    The statements for manifolds without boundary can be found in \cite[Thm.~2.29, Eq.~2.23]{AGusers}, see also \cite[Rmk.~8]{lisini2007characterization}, \cite[Thm.~8.4.5]{AGS}. In our case, we can always see $M$ inside a manifold without boundary of the same dimension, and use the aforementioned results. 
    Indeed, we can see $M$ embedded inside its double $\hat M$ (which is a smooth differential manifold) and then extend the metric of $M$ smoothly on $\hat M$, see \cite[Ex.~6.44]{LeeRM}.
\end{proof}

\begin{lem}[weak-strong convergence]
\label{lem:weak_strong_CV}
Let $X$ be a Polish space and $\mu_n,\mu\in \P(X)$ be Radon measures such that $\mu_n\narrowcv\mu$ as $n\to\infty$.
Take two sequences $u_n,v_n\in L^2(\mu_n)$ converging weakly to $u,v\in L^2(\mu)$ in the sense that
$$
\int_X  u_n \xi\, \rd \mu_n \to \int_X  u \xi\,\rd \mu
\qqtext{and}
\int_X  v_n\xi\, \rd \mu_n \to \int_X  v \xi\,\rd \mu
$$
for all $\xi\in C_c(X)$.
Assume in addition that $u_n$ converges in norm, i-e
\begin{equation}
\label{eq:convergence_norms}
\int_X  |u_n|^2 \rd \mu_n \to \int_X  |u|^2\rd \mu.
\end{equation}
Then
$$
\int_X u_n v_n\,\rd\mu_n \to \int_X u v\,\rd\mu.
$$
\end{lem}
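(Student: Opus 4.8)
The plan is to reduce the statement to the elementary Hilbert-space principle that \emph{weak convergence together with convergence of norms implies strong convergence}, the only twist being that the base measures $\mu_n$ vary, so this principle has to be made uniform in $n$ — which is exactly what the tightness furnished by $\mu_n\narrowcv\mu$ provides. Throughout I would freely use that $\sup_n\|v_n\|_{L^2(\mu_n)}=:M<\infty$ (this is implicit in the notion of weak $L^2$-convergence along a varying family of measures, cf.\ \cite[Def.~5.4.3]{AGS}, and it does hold in our application to Theorem~\ref{theo:chain_rule_G}); the norms $\|u_n\|_{L^2(\mu_n)}$ are bounded as well, by \eqref{eq:convergence_norms}. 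I would also first observe that one may test the weak convergences against all of $C_b(X)$ rather than only $C_c(X)$: in our application $X$ is compact and the two coincide, while in general the passage from $C_c$ to $C_b$ is routine, using the tightness of $\{\mu_n\}_n$ (Prokhorov) together with the Cauchy--Schwarz tail bound $\bigl|\int_{X\setminus K}\xi w_n\,\rd\mu_n\bigr|\le\|\xi\|_\infty\,M\,\mu_n(X\setminus K)^{1/2}$, valid for $w_n\in\{u_n,v_n\}$ on a compact set $K$ carrying all but an $\eta$-fraction of the mass.

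Next I would upgrade the convergence of $u_n$ to a genuine strong convergence: for every $\phi\in C_b(X)$,
\[
\int_X|u_n-\phi|^2\,\rd\mu_n\;\xrightarrow[n\to\infty]{}\;\int_X|u-\phi|^2\,\rd\mu .
\]
This follows by expanding $|u_n-\phi|^2=|u_n|^2-2u_n\phi+|\phi|^2$ and passing to the limit term by term: $\int_X|u_n|^2\,\rd\mu_n\to\int_X|u|^2\,\rd\mu$ is exactly \eqref{eq:convergence_norms}; $\int_X u_n\phi\,\rd\mu_n\to\int_X u\phi\,\rd\mu$ is the (extended) weak convergence of $u_n$; and $\int_X|\phi|^2\,\rd\mu_n\to\int_X|\phi|^2\,\rd\mu$ holds because $|\phi|^2\in C_b(X)$ and $\mu_n\narrowcv\mu$. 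In particular, if $\phi$ is chosen with $\|u-\phi\|_{L^2(\mu)}\le\eps$, then $\limsup_n\|u_n-\phi\|_{L^2(\mu_n)}\le\eps$.

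Finally I would run an $\eps$-splitting. Fix $\eps>0$ and pick $\phi\in C_b(X)$ with $\|u-\phi\|_{L^2(\mu)}\le\eps$ — possible since bounded Lipschitz functions are dense in $L^2$ of a finite Borel measure on a metric space. Writing
\[
\int_X u_nv_n\,\rd\mu_n-\int_X uv\,\rd\mu
=\int_X(u_n-\phi)v_n\,\rd\mu_n
+\Bigl(\int_X\phi v_n\,\rd\mu_n-\int_X\phi v\,\rd\mu\Bigr)
+\int_X(\phi-u)v\,\rd\mu ,
\]
the first term is $\le\|u_n-\phi\|_{L^2(\mu_n)}\|v_n\|_{L^2(\mu_n)}\le M\,\|u_n-\phi\|_{L^2(\mu_n)}$, whose $\limsup_n$ is $\le M\eps$ by the previous step; the middle term tends to $0$ by the weak convergence of $v_n$ (with $\phi\in C_b(X)$ an admissible test function); and the last term is $\le\eps\,\|v\|_{L^2(\mu)}$. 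Hence $\limsup_n\bigl|\int_X u_nv_n\,\rd\mu_n-\int_X uv\,\rd\mu\bigr|\le\eps\bigl(M+\|v\|_{L^2(\mu)}\bigr)$, and letting $\eps\to0$ concludes. The one step that really requires care is the uniformity in $n$ of the approximation $u_n\approx\phi$, i.e.\ the strong-convergence claim of the second paragraph; everything else is a soft limiting argument.
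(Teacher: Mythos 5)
Your proof is correct and follows essentially the same argument as the paper: the identical three-term decomposition $u_nv_n\,\rd\mu_n-uv\,\rd\mu=(u_n-\phi)v_n\,\rd\mu_n+(\phi v_n\,\rd\mu_n-\phi v\,\rd\mu)+(\phi-u)v\,\rd\mu$, the same expansion of $\|u_n-\phi\|^2_{L^2(\mu_n)}$ to convert convergence of norms into a quantitative approximation, and the same $\limsup$-plus-density finish. The only cosmetic difference is that the paper works directly with $U\in C_c(X)$, which is dense in $L^2(\mu)$ for a Radon measure $\mu$ on a Polish space, thereby bypassing your preliminary extension of the weak-convergence hypothesis from $C_c(X)$ to $C_b(X)$ test functions.
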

This a variant on two usual facts that a) ``weak convergence and convergence of the norm implies strong convergence'' for $u_n$, combined with b) the product $u_n v_n$ of strongly with weakly converging sequences converges.
However due to the varying base measure one cannot really make sense of ``$u_n$ converges strongly'', and our convergence in norm \eqref{eq:convergence_norms} is actually the definition of strong convergence in \cite[definition 5.4.3]{AGS}.
To the best of knowledge this is nowhere written in the literature in this form.
\begin{proof}
Take any $U\in C_c(X)$, and write
$$
u_n v_n \,\rd\mu_n -u  v\, \rd\mu
=(u_n-U) v_n \,\rd\mu_n
+  \left(U v_n\,\rd\mu_n - U v\,\rd\mu \right)
+ (U-u) v\,\rd\mu .
$$
Then by Cauchy-Schwarz inequality
\begin{multline*}
 \left|\int_X u_n v_n\,\rd\mu_n - \int_X u v\,\rd\mu \right|
\leq \|u_n-U\|_{L^2(\mu_n)}\|v_n\|_{L^2(\mu_n)}
\\
+\underbrace{\left|\int_X U v_n\,\rd\mu_n - \int_X U v\,\rd\mu \right|}_{\xrightarrow[n\to\infty]{} 0}
+\|U-u\|_{L^2(\mu) }\|v\|_{L^2(\mu)}
\end{multline*}
It is not difficult to check as usual that, being weakly converging, $v_n$ is bounded in $L^2(\mu_n)$.
Convergence in norm \eqref{eq:convergence_norms} gives
\begin{multline*}
\|u_n-U\|_{L^2(\mu_n)}^2
=\int_X |u_n-U|^2\rd\mu_n
=\int_X |u_n|^2\rd\mu_n + \int_X|U|^2\rd\mu_n - 2 \int_X u_n U\rd\mu_n
\\
\xrightarrow[n\to\infty]{}
\int_X |u|^2\rd\mu  + \int_X|U|^2\rd\mu  - 2 \int_X u U\rd\mu
=\|u-U\|^2_{L^2(\mu)},
\end{multline*}
and taking $\limsup$ in the previous inequality yields
$$
\limsup\limits_{n\to\infty}
\left|\int_X u_n v_n\,\rd\mu_n - \int_X u v\,\rd\mu \right|
\leq C\|u-U\|^2_{L^2(\mu)}
$$
with $C=\limsup\|v_n\|_{L^2(\mu_n)}+\|v\|_{L^2(\mu)}$ finite and independent of $U$.
Being $\mu $ a Radon measure, $C_c(X)$ is dense in $L^2(\mu )$.
Choosing $U\in C_c(X)$ appropriately we can finally make $\|u-U\|^2_{L^2(\mu)}$ arbitrarily small and the claim follows.
\end{proof}
\begin{lem}
\label{lem:nablarho_is_gradient} 
Let $\mu=\rho V$ have finite Fisher information $\I(\mu)<+\infty$ with density bounded from above and from below, $0<\lambda^2\leq\rho(x)\leq \Lambda^2<+\infty$.
Then $\nabla\log\rho\in \overline{\{\nabla\phi,\,\phi\in C^1(M)\}}^{L^2(\mu)}$.
\end{lem}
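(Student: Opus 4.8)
The plan is to show that, under the two-sided bound on $\rho$, the object denoted $\nabla\log\rho$ is nothing but the ordinary (weak) gradient of a genuine Sobolev function $\log\rho\in H^1(M)$, and then to approximate this function by functions smooth up to the boundary, transferring the convergence from $L^2(V)$ to $L^2(\mu)$ by means of the upper bound $\rho\le\Lambda^2$.

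First I would observe that $\I(\mu)<+\infty$ together with $\lambda^2\le\rho\le\Lambda^2$ forces $\rho\in H^1(M)$. Indeed, by definition of finite Fisher information $\mu$ has a logarithmic derivative $u=\tfrac{\rd\nabla\mu}{\rd\mu}\in L^2(\mu)$, so that the weak gradient of $\rho$ is $\nabla\rho=\rho\,u\in L^2(V)$ (using $\rho\le\Lambda^2$ and $L^2(\mu)\subseteq L^2(V)$, the latter inclusion following from $\rho\ge\lambda^2$), while $\rho$ is bounded, hence in $L^2(V)$. Since $t\mapsto\log t$ is $C^1$ and Lipschitz on the compact interval $[\lambda^2,\Lambda^2]$, the chain rule for Sobolev functions gives $\log\rho\in H^1(M)$ with $\nabla(\log\rho)=\nabla\rho/\rho=u$, which is precisely the object written $\nabla\log\rho$ in the statement (and $\log\rho$ is bounded, hence also in $L^2(V)$, confirming $\log\rho\in H^1(M)$).

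Next I would invoke the density of $C^\infty(M)$ (functions smooth up to the boundary) in $H^1(M)$, a standard fact for compact manifolds with smooth boundary, provable by a partition of unity reducing to the density of functions smooth up to the boundary of the half-space $\R^d_+$ in $H^1(\R^d_+)$; alternatively, and more in the spirit of the main text, one may simply take $\phi_n=P_{1/n}(\log\rho)$, which is smooth up to the boundary and converges to $\log\rho$ in $H^1(M)$ because $\log\rho$ lies in the form domain of the Neumann Laplacian. Either way one obtains $\phi_n\in C^1(M)$ with $\nabla\phi_n\to\nabla\log\rho$ in $L^2(V;TM)$. The upper bound then yields
$$
\int_M|\nabla\phi_n-\nabla\log\rho|^2\,\rd\mu=\int_M|\nabla\phi_n-\nabla\log\rho|^2\,\rho\,\rd V\le\Lambda^2\int_M|\nabla\phi_n-\nabla\log\rho|^2\,\rd V\xrightarrow[n\to\infty]{}0,
$$
so $\nabla\log\rho\in\overline{\{\nabla\phi,\ \phi\in C^1(M)\}}^{L^2(\mu)}$, as claimed.

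The argument is essentially routine; the only mildly delicate points are the identification of the measure-theoretic logarithmic derivative with the ordinary weak gradient of the function $\log\rho$ — which genuinely uses the two-sided bound on $\rho$ — and the invocation of the correct global density statement on a compact manifold with boundary. I do not expect any serious obstacle here, which is also why the general case (where $\rho$ is only assumed to have finite Fisher information, without bounds) is stated merely as a conjecture.
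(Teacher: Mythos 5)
Your proof is correct, and it takes a genuinely different (and in fact lighter) route than the paper's. You observe that the two-sided bound on $\rho$, together with $\I(\mu)<\infty$, forces $\rho\in H^1(M)$, and then the Lipschitz/Sobolev chain rule upgrades this to $\log\rho\in H^1(M)$ with weak gradient equal to the measure-theoretic logarithmic derivative $u$; from there the result is an immediate consequence of density of $C^\infty(M)$ in $H^1(M)$, the transfer from $L^2(V)$ to $L^2(\mu)$ being trivial thanks to $\rho\le\Lambda^2$. The paper instead starts from $u=\sqrt{\rho}\in H^1(M)$ (the quantity that controls $\I(\mu)$ via \eqref{eq:def_Fisher_H1_u}), mollifies it by $u^n=P_{1/n}u$, sets $\phi^n=2\log u^n$, and then proves $\nabla\phi^n\to\nabla\log\rho$ in $L^2(\mu)$ by a pointwise-a.e.\ plus Vitali uniform-integrability argument. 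For the statement as given your argument is cleaner: it isolates the key structural fact that $\log\rho$ is itself a bona fide $H^1$ function, and avoids the Vitali step entirely. The paper's choice to regularize $\sqrt{\rho}$ rather than $\log\rho$ is nonetheless the more natural starting point if one hopes to remove the lower bound on $\rho$ (the conjectured general statement in the discussion before the appendix), since $\sqrt{\rho}\in H^1$ persists under $\I(\mu)<\infty$ alone whereas $\log\rho\in H^1$ does not; you correctly note at the end that your route is tailored to the bounded case. One small presentational remark: you implicitly use that the range of $\rho$ is $[\lambda^2,\Lambda^2]$ to invoke Lipschitzness of $\log$; it would be worth making explicit that the Sobolev chain rule for a $C^1$ function with bounded derivative on the essential range is what is applied, since $\log$ itself is not globally Lipschitz.
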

\begin{proof}
By assumption we have that $u=\sqrt{\rho}\in H^1$ with $0<\lambda\leq u(x)\leq \Lambda$.
Let $u^n=P_{1/n}u$.
By standard properties of the heat flow $u^n$ belongs to $C^1(M)$, converges to $u$ in strongly in $H^1$, and satisfies the same bounds $\lambda\leq u^n(x)\leq \Lambda$.
Letting $\rho^n=(u^n)^2$ and $\phi^n=\log(\rho^n)=2\log u^n\in C^1(M)$, a direct computation gives
$$
\int_M\left|\nabla\phi^n-\nabla\log\rho\right|^2\rd\mu
=
\int_M\left|2\frac{\nabla u^n}{u^n}-2\frac{\nabla u}{u}\right|^2 u^2\rd V
=4\int_M\left|\frac{u}{u^n}\nabla u^n-\nabla u\right|^2\rd V.
$$
Since $\nabla u^n\to \nabla u$ and $u^n\to u\geq \lambda>0$ in $L^2$, we have, up to a subsequence, that $f^n=\left|\frac{u}{u^n}\nabla u^n-\nabla u\right|^2\to 0$ a.e.
Moreover, the strong $L^2$ convergence of $\nabla u^n$ implies that $|\nabla u^n|^2$ is uniformly integrable.
Together with the uniform bounds $0\leq \frac{u}{u^n}\leq \frac{\Lambda}{\lambda}$, we see that $|f^n|\leq 2\frac{\Lambda^2}{\lambda^2}|\nabla u^n|^2 + 2 |\nabla u|^2$ is uniformly integrable, hence by Vitali's convergence theorem $f^n\to 0$ in $L^1(V)$ and the statement follows.
\end{proof}

\end{appendices}
\section*{Acknowledgments}
J.-B. C. was funded by FCT - Funda\c{c}\~ao para a Ci\^encia e a Tecnologia through project UID/04561/2025 (DOI 10.54499/UID/04561/2025).
M.F. acknowledges funding by the Deutsche Forschungsgemeinschaft (DFG, German Research Foundation) under Germany’s Excellence Strategy – EXC-2047/1 – 390685813.
L.M. was funded by FCT - Funda\c{c}\~ao para a Ci\^encia e a Tecnologia through a personal grant 2020/00162/CEECIND (DOI 10.54499/2020.00162.CEECIND/CP1595/CT0008) as well as project UID/00208/2025 (DOI 10.54499/UID/00208/2025), and wishes to thank Professor Doctor H. Lavenant for numerous discussions on the regularization of continuity equations.

\bibliographystyle{plain}
\bibliography{biblio}
\end{document}